\def\N{\mathbb N}
\def\A{\mathcal A}
\def\B{\mathcal B}
\def\LL{\mathcal L}
\def\R{\mathcal R}
\def\uu{\mathbf{u}}
\def\vv{\mathbf{v}}
\def\xx{\mathbf{x}}
\def\yy{\mathbf{y}}
\def\ww{\mathbf{w}}
\def\dd{\mathbf{d}}
\def\gap{\mathrm{gap}}
\def\CR{\mathrm{E}}
\def\per{\mathrm{Per}}
\def\barva{\mathrm{colour}}
\theoremstyle{definition}
\newtheorem{definition}{Definition}
\newtheorem{corollary}[definition]{Corollary}
\newtheorem{remark}[definition]{Remark}
\newtheorem{example}[definition]{Example}
\theoremstyle{plain}
\newtheorem{theorem}[definition]{Theorem}
\newtheorem{proposition}[definition]{Proposition}
\newtheorem{lemma}[definition]{Lemma}
\newtheorem{conjecture}[definition]{Conjecture}
\begin{document}
\begin{frontmatter}
\title{On minimal critical exponent of balanced sequences}

\author[cvut]{L\!'ubom\'ira Dvo\v r\'akov\'a}
\ead{lubomira.dvorakova@fjfi.cvut.cz}
\author[cvut]{Edita Pelantov\'a}
\ead{edita.pelantova@fjfi.cvut.cz}
\author[cvut]{Daniela Opo\v censk\'a}
\ead{opocedan@fjfi.cvut.cz}
\author[ufn]{Arseny~M.~Shur}
\ead{arseny.shur@urfu.ru}
\address[cvut]{FNSPE Czech Technical University in Prague, Czech Republic}
\address[ufn]{Ural Federal University, Ekaterinburg, Russia}

\begin{abstract}
We study the threshold between avoidable and unavoidable repetitions in infinite balanced sequences over finite alphabets. The conjecture stated by Rampersad, Shallit and Vandomme says that the minimal critical exponent of balanced sequences over the alphabet of size $d \geq 5$ equals $\frac{d-2}{d-3}$. This conjecture is known to hold for $d\in \{5, 6, 7,8,9,10\}$. We refute this conjecture by showing that the picture is different for bigger alphabets. We prove that critical exponents of balanced sequences over an alphabet of size $d\geq 11$ are lower bounded by $\frac{d-1}{d-2}$ and this bound is attained for all even numbers $d\geq 12$. According to this result, we conjecture that the least critical exponent of a balanced sequence over $d$ letters is $\frac{d-1}{d-2}$ for all $d\geq 11$.

\end{abstract}

\begin{keyword}
balanced sequence \sep critical exponent \sep repetition threshold \sep constant gap sequence \sep return word \sep bispecial factor \sep Sturmian sequence

\MSC 68R15
\end{keyword}
\end{frontmatter}

\section{Introduction}

The birth of combinatorics on words is linked to the study of repetitions of factors in infinite words (or sequences) by a Norwegian mathematician Axel Thue in 1906 \citep{Thue06}. He answered affirmatively the following two questions: Is there a binary sequence without cubes? Is there a ternary sequence without squares? In \citep{Thue12} he constructed the famous Thue--Morse sequence which is not only cube-free, but even overlap-free. Squares, cubes and overlaps are particular cases of \emph{fractional powers}. A word $w$ is a fractional power of a word $u$ if an infinite repetition $uuu\cdots$ begins with $w$. The ratio of lengths of $w$ and the shortest possible $u$ is the \emph{exponent} of $w$. The supremum of the exponents of all non-empty factors occurring in a sequence  $\uu$ is the \emph{critical} (or \emph{local}) exponent of $\uu$. The critical exponent of sequences and related questions of repetition avoidance has become today a classic area of combinatorics on words.

Obviously, the larger alphabet the smaller critical exponent can be found  among  the sequences over the alphabet. 
In 1988, Carpi \citep{Car1988} showed that for every real number $\alpha > 1$ there exists $d$ and a $d$-ary pure morphic sequence  with the critical exponent less than $\alpha$. Krieger and Shallit \citep{KrSh07} proved that  every real number greater than 1 is a critical exponent of some sequence. 

The search for the minimal critical exponent of infinite sequences over an alphabet of a fixed  size resulted in a conjecture formulated by Dejean \citep{Dej72} in 1972. The conjecture  states that the infimum of critical exponents of $d$-ary sequences equals:
\begin{itemize}
    \item 2 for $d=2$;
    \item $7/4$ for $d=3$;
    \item $7/5$ for $d=4$; 
    \item $\frac{d}{d-1}$ for $d\geq 5$.
\end{itemize}
The conjecture had been proved step by step by many people 
\citep{Dej72, Pan84c, Mou92, MoCu07, Car07, CuRa11, Rao11}.

The least critical exponent is also studied for particular families of sequences.
Carpi and de Luca \citep{CaLu2000} found out that the minimal critical exponent of a Sturmian sequence is  $\frac{5+\sqrt{5}}{2}$, reached by the Fibonacci sequence. The least critical exponent for binary rich sequences was determined recently by Curie, Mol and Rampersad \citep{CMR20}. The minimal value $2+ \tfrac{\sqrt{2}}{2}$  is reached by a complementary symmetric Rote sequence. Shallit and Shur \citep{ShSh19} proved a number of results connecting factor complexity and critical exponent of sequences. For example, they established that the Thue-Morse sequence and the twisted Thue-Morse sequence have, respectively, the minimum and the maximum factor complexity over all binary overlap-free sequences; the minimal critical exponent of a binary sequence of factor complexity $2n$ is $5/2$; the set of ternary square-free sequences either has no sequence of minimum complexity, or the minimum is reached by the ternary Thue sequence. 

For some types of sequences,  formulae for computation of the critical exponent are known. 
Blondin-Mass\'e  et al. \citep{BBGL07} computed the critical exponent for generalized Thue-Morse sequences. The critical  exponent of complementary symmetric Rote sequences is computed in \citep{DvMePe20}. Justin and Pirillo \citep{JuPi02} gave a formula for the critical exponent of standard episturmian sequences which are fixed by a primitive morphism. Krieger \citep{Krieger2007} provided an algorithm to compute the critical exponent for sequences that are fixed points of non-erasing morphisms.

In this paper we focus on the least critical exponent of balanced sequences. Let us recall that a sequence over a finite alphabet is balanced if, for any two of its factors $u$ and $v$ of the same length, the number of occurrences of each letter in $u$ and $v$ differs by at most 1. Over a binary alphabet, aperiodic balanced sequences coincide with Sturmian sequences, as shown by Morse and Hedlund~\citep{MoHe40}. Hubert~\citep{Hubert00} provided a uniform construction of all $d$-ary aperiodic recurrent balanced sequences from Sturmian sequences. Recently, Rampersad, Shallit and Vandomme~\citep{RSV19} found balanced sequences with the least critical exponent over alphabets of size 3 and 4 and also conjectured that the minimal critical exponent of balanced sequences over a $d$-ary alphabet with $d \geq 5$ is $\frac{d-2}{d-3}$.  For $d \leq 10$, they defined the candidate sequences ${\bf x}_d$, obtained from Sturmian sequences with  quadratic slope, to reach this minimum. In \citep{DolceDP21}, an algorithm  for computing the  critical exponent of  balanced sequences  of this type is deduced. The conjecture of Rampersad, Shallit and Vandomme was confirmed for $d \leq 8$  in~\citep{Bar20, BaSh19} and for $d\in \{9,10\}$ in~\citep{DolceDP21}.

In this paper, we first show that for balanced sequences over a $d$-ary alphabet with $d \geq 11$, the critical exponent is greater than or equal to $\frac{d-1}{d-2}$. Then for every  even $d\geq 12$, we find a $d$-ary balanced sequence ${\bf x}_d$ having the critical exponent $\frac{d-1}{d-2}$. Again, each sequence ${\bf x}_d$  is derived from a Sturmian sequence with a quadratic slope. In particular, for $d\geq 14$ this Sturmian sequence is the Fibonacci sequence. 

As $\frac{d-1}{d-2}<\frac{d-2}{d-3}$, our result  refutes the conjecture by Rampersad, Shallit and Vandomme.  
We state as a new conjecture that the minimal critical exponent of balanced sequences equals $\frac{d-1}{d-2}$ for $d \geq 11$. Thus it remains to prove this conjecture for sequences over alphabets of odd size.

\section{Preliminaries}
\label{Section_Preliminaries}
An \textit{alphabet} $\A$ is a finite set of symbols called \textit{letters}.
A \textit{word} over $\A$ of \textit{length} $n$ is a string $u = u_0 u_1 \cdots u_{n-1}$, where $u_i \in \A$ for all $i \in \{0,1, \ldots, n-1\}$.
The length of $u$ is denoted by $|u|$. 
The set of all finite words over $\A$ together with the operation of concatenation forms a monoid, denoted $\A^*$.
Its neutral element is the \textit{empty word} $\varepsilon$ and we denote $\A^+ = \A^* \setminus \{\varepsilon\}$.
If $u = xyz$ for some $x,y,z \in \A^*$, then $x$ is a \textit{prefix} of $u$, $z$ is a \textit{suffix} of $u$ and $y$ is a \textit{factor} of $u$.
To any word $u$ over $\A$ with cardinality $\#\A = d$,  we assign its \textit{Parikh vector} $\vec{V}(u) \in \N^{d}$ defined as $(\vec{V}(u))_a = |u|_a$ for all $a \in \A$, where $|u|_a$ is the number of letters $a$ occurring in $u$.

A \textit{sequence} over $\A$ is an infinite string $\uu = u_0 u_1 u_2 \cdots$, where $u_i \in \A$ for all $i \in \N$. The notation $\A^{\mathbb{N}}$ stands for the set of all sequences over $\A$. 
We always denote sequences by bold letters. 
The shift operator $\sigma$ maps any sequence $\uu = u_0 u_1 u_2 \cdots$ to the sequence $\sigma(\uu) = u_1 u_2 u_3 \cdots$. The \emph{frequency} of a letter $a$ in a sequence $\uu$ is the limit $\rho_a(\uu)=\lim_{n\to\infty} \frac{|u_0\cdots u_{n-1}|_a}{n}$ if it exists.

A sequence $\uu$ is \textit{eventually periodic} if $\uu = vwww \cdots = v(w)^\omega$ for some $v \in \A^*$ and $w \in \A^+$.
It is \textit{periodic} if $\uu=w^{\omega}$. In both cases, the number $|w|$ is a \textit{period} of $\uu$. We write $\per(\uu)$ for the minimal period of $\uu$.
If $\uu$ is not eventually periodic, then it is \textit{aperiodic}.
A \textit{factor} of $\uu = u_0 u_1 u_2 \cdots$ is a word $y$ such that $y = u_i u_{i+1} u_{i+2} \cdots u_{j-1}$ for some $i, j \in \N$, $i \leq j$. 
The number $i$ is called an \textit{occurrence} of the factor $y$ in $\uu$.
In particular, if $i = j$, the factor $y$ is the empty word $\varepsilon$ and any index $i$ is its occurrence.
If $i=0$, the factor $y$ is a \textit{prefix} of $\uu$.
If each factor of $\uu$ has infinitely many occurrences in $\uu$, the sequence $\uu$ is \textit{recurrent}.
Moreover, if for each factor the distances between its consecutive occurrences are bounded, $\uu$ is \textit{uniformly recurrent}. In a uniformly recurrent sequence all letters have frequencies.

The \textit{language} $\mathcal{L}(\uu)$ of a sequence $\uu$ is the set of all its factors.
A factor $w$ of $\uu$ is \textit{right special} if $wa, wb$ are in $\mathcal{L}(\uu)$ for at least two distinct letters $a,b \in \A$.
A \textit{left special} factor is defined symmetrically.
A factor is \textit{bispecial} if it is both left and right special.
The \textit{factor complexity} of a sequence $\uu$ is the mapping $\mathcal{C}_\uu: \N \to \N$ defined by
$\mathcal{C}_\uu(n) = \# \{w \in \mathcal{L}(\uu) : |w| =  n \}$.

The factor complexity of an aperiodic sequence $\uu$ satisfies $\mathcal{C}_{\uu}(n) \ge n+1$ for all $n \in \N$. The aperiodic sequences with the lowest possible factor complexity $\mathcal{C}_{\uu}(n) = n+1$ are called \textit{Sturmian sequences}.
Clearly, all Sturmian sequences are defined over a binary alphabet, e.g., $\{ {\tt a,b} \}$.

A~sequence $\uu\in \A^{\mathbb{N}}$ is \textit{balanced} if for every letter $a \in \A$ and every pair of factors $u,v \in {\mathcal L}(\uu)$ with $|u|=|v|$, we have $|u|_a-|v|_a\leq 1$. The class of Sturmian sequences and the class of aperiodic balanced sequences coincide over a~binary alphabet (see~\citep{MoHe40}). Every recurrent balanced sequence is uniformly recurrent (see \citep{DolceDP21}). 

A \textit{morphism} over $\A$ is a mapping $\psi: \A^* \to \A^*$ such that $\psi(uv) = \psi(u)\psi(v)$  for all $u, v \in \A^*$.
Morphisms can be naturally extended to $\A^{\mathbb{N}}$ by setting
$\psi(u_0 u_1 u_2 \cdots) = \psi(u_0) \psi(u_1) \psi(u_2) \cdots\,$.
A \textit{fixed point} of a morphism $\psi$ is a sequence $\uu$ such that $\psi(\uu) = \uu$.

Consider a factor $w$ of a recurrent sequence $\uu = u_0 u_1 u_2 \cdots$.
Let $i < j$ be two consecutive occurrences of $w$ in $\uu$.
Then the word $u_i u_{i+1} \cdots u_{j-1}$ is a \textit{return word} to $w$ in $\uu$.
The set of all return words to $w$ in $\uu$ is denoted by $\R_\uu(w)$.
If $\uu$ is uniformly recurrent, the set $\R_\uu(w)$ is finite for each factor $w$.
If $k$ is the first occurrence of a factor $w$ in $\uu$, then the $k$-th shift of $\uu$ can be written as a concatenation $\sigma^{k}(\uu)=r_{d_0}r_{d_1}r_{d_2} \cdots$ of return words to $w$. In particular, if $w$ is a prefix of $\uu$, then $\uu=r_{d_0}r_{d_1}r_{d_2} \cdots$. 
The sequence
$\dd_\uu(w) = d_0d_1d_2 \cdots$
 over the alphabet of cardinality $\# \R_\uu(w)$ is called the  \textit{derived sequence} of $\uu$ to $w$.
 The concept of derived sequences was introduced by Durand~\citep{Dur98}.

For an arbitrary nonempty word $z$, let $u$ be the shortest word such that $z$ is a prefix of the periodic sequence $u^\omega$. The number $|u|$ is the (minimal) \emph{period} of $z$, and the ratio $e=|z|/|u|$ is the \emph{exponent} of $z$, written as $e=\exp(z)$. The \emph{critical exponent} of an infinite sequence $\uu$ is defined as 
$$ 
\CR(\uu)  = \sup \{ \exp(z) : z \ \text{is a non-empty factor of} \ \uu\}\,. 
$$
The critical exponent of a uniformly recurrent sequence\footnote{In fact, the same result holds for all aperiodic recurrent sequences. As Theorem~\ref{Prop_FormulaForCR} is sufficient for our purposes, we do not prove the more general result here.} can be computed from its bispecial factors and their return words:
\begin{theorem}[{\citep{DDP21}}]
\label{Prop_FormulaForCR}
Let $\uu$ be a uniformly recurrent aperiodic sequence.
Let $(w_n)$ be a sequence of all bispecial factors of $\uu$, ordered by their length.
For every $n \in \N$, let $v_n$ be a shortest return word to $w_n$ in $\uu$.
Then
$$
\CR(\uu) = 1 + \sup\limits_{n \in \N} \left\{ \frac{|w_n|}{|v_n|} \right\}.
$$
\end{theorem}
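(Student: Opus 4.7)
The plan is to prove matching lower and upper bounds on $\CR(\uu)$; combining them gives the formula.

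For the lower bound, I would show that each bispecial $w_n$ already forces a factor of large exponent, namely $v_n w_n$. By definition of return word there are consecutive occurrences of $w_n$ in $\uu$ separated by exactly $|v_n|$ positions, so $v_n w_n$ is a factor of $\uu$ that starts with one copy of $w_n$ and ends with the next. Hence $v_n w_n$ admits $|v_n|$ as a period, which yields $\exp(v_n w_n) \geq 1 + |w_n|/|v_n|$. Taking the supremum gives $\CR(\uu) \geq 1 + \sup_n |w_n|/|v_n|$.

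The upper bound is the substantive direction. I would fix an arbitrary factor $z$ of $\uu$ with $\exp(z) > 1$ (factors of exponent $\leq 1$ are trivially dominated), let $p = \per(z)$, and consider the prefix $w$ of $z$ of length $|z| - p$. Inside the occurrence of $z$ in $\uu$, the word $w$ appears at two positions at distance exactly $p$. The central construction is then to extend these two parallel occurrences simultaneously to the left and to the right, as long as the letters read at the two positions coincide, arriving at a maximal common extension $W \supseteq w$. The extension must terminate on each side: otherwise two distinct shifts of $\uu$ would coincide on an infinite one-sided tail, making $\uu$ eventually periodic and, by uniform recurrence, purely periodic — contradicting aperiodicity. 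By maximality, $W$ is preceded (resp.\ followed) by two distinct letters at the two positions, so $W$ is left and right special, hence bispecial; write $W = w_{n_0}$.

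To close the inequality I would use the two occurrences of $W$ in $\uu$ at distance $p$: this distance decomposes as a sum of lengths of return words to $W$, so in particular $p \geq |v_{n_0}|$, the length of the shortest return word. Chaining with $|W| \geq |w| = |z|-p$ gives
$$\exp(z) - 1 \;=\; \frac{|z|-p}{p} \;\leq\; \frac{|W|}{p} \;\leq\; \frac{|w_{n_0}|}{|v_{n_0}|} \;\leq\; \sup_n \frac{|w_n|}{|v_n|},$$
and taking the supremum over $z$ produces the upper bound. The only step requiring real care is the bilateral maximal-extension construction in the upper bound — verifying that $W$ is genuinely bispecial and that aperiodicity combined with uniform recurrence rules out an infinite extension; the remaining steps are short definitional checks and one-line arithmetic.
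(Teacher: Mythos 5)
The paper does not prove this statement; it is quoted from \citep{DDP21}, so there is no in-paper argument to compare against. Your proposal follows what is essentially the standard proof of this formula, and both directions are sound in outline. The lower bound via $\exp(v_nw_n)\ge 1+|w_n|/|v_n|$ is correct, and the upper bound via the bilateral maximal extension of the two overlapping occurrences of the prefix $w$ of length $|z|-\per(z)$, followed by $p\ge |v_{n_0}|$ because the two occurrences of $W$ lie at distance $p$ and any such distance is a sum of return-word lengths, is the right chain of inequalities.

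There is one boundary case you gloss over. Since $\uu=u_0u_1u_2\cdots$ is one-sided, the leftward extension never produces ``two distinct shifts coinciding on an infinite tail''; it can instead terminate by running into position $0$. In that case the first occurrence of $W$ is a prefix of $\uu$ and has no preceding letter, so maximality alone does not certify that $W$ is left special. This is genuinely needed (a non-recurrent sequence can have a long periodic prefix whose associated $W$ is not left special), and it is exactly where the recurrence hypothesis earns its keep: since $\uu$ is recurrent, the factor $u_0\cdots u_{i+p+|w|+R-1}$ (the full periodic stretch including its right-maximal extension) occurs again at some position $j\ge 1$, and rerunning the two-sided extension from that occurrence terminates on both sides with genuine mismatches, or equivalently one takes $W$ to be the length-$(|f|-p)$ prefix of a \emph{longest} factor $f$ of $\uu$ with period $p$ containing $z$ (such a longest factor exists because uniform recurrence plus aperiodicity bounds the length of factors of any fixed period $p$ and exponent $>1$), and uses recurrence to extend $f$ by one letter on the left. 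With that patch, which also silently uses that the left- and right-extension letters exist as factors of $\uu$, your argument is complete; everything else is as you say a definitional check.
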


\subsection{Sturmian sequences}
\label{sec:sturmian}

Sturmian sequences are a principal tool in the study of balanced sequences over arbitrary alphabets. In this section we recall the necessary facts about them.

We recall that Sturmian sequences can be considered as \emph{cutting sequences} of straight lines with irrational slopes \citep{MoHe40}. The definition is as follows. Consider the positive quadrant of the coordinate plane and a square grid on it, parallel to the axes (the axes themselves do not belong to the grid). The intersection of a straight line with the grid can be encoded as a binary sequence: symbol $\mathtt{a}$ (resp., $\mathtt{b}$) encodes the intersection with the horizontal (resp., vertical) line of the grid. Thus, each Sturmian sequence $\uu$ has an irrational \emph{slope} $\theta(\uu)$ which is the slope of the straight line producing $\uu$ as a cutting sequence. From the definition of the cutting sequence it is clear that the letter frequencies in a Sturmian sequence are irrational, and  the slope of a Sturmian sequence equals the ratio of these frequencies. 

All Sturmian sequences with the same slope share the same language. Among them, there is a unique \textit{standard sequence}, which is the cutting sequence of a line intersecting the origin. Equivalently, the standard sequence can be defined by the condition that both sequences ${\tt a}\uu$ and ${\tt b}\uu$ are Sturmian.

\begin{example}
\label{ex:FiboDef}
The most famous standard sequence is  the \textit{Fibonacci sequence}
$$
\uu_f = {\tt babbababbabbababbababb}\cdots\,,
$$
defined as the fixed point of the morphism $f: {\tt b} \mapsto {\tt ba}$, ${\tt a} \mapsto {\tt b}$. Its slope is $\varphi=\frac{\sqrt{5}-1}{2}\approx 0.618$ (the inverse of the golden ratio), the frequencies of $\tt a$ and $\tt b$ are $\varphi^2$ and $\varphi$ respectively, and the critical exponent of $\uu_f$ is $3+\varphi$, which is the minimum among Sturmian sequences.
\end{example}

We use the characterization of standard sequences by their directive sequences. To introduce them, we define the two morphisms
$$
G = \
\left\{ \,
\begin{aligned}
{\tt a} & \to {\tt a} \\
{\tt b} & \to {\tt ab} \,
\end{aligned}
\right.
\quad \text{and} \quad
D = \
\left\{ \,
\begin{aligned}
{\tt a} & \to {\tt ba} \\
{\tt b} & \to {\tt b} \,
\end{aligned}
\right. .
$$

\begin{proposition}[{\citep{JuPi02}}]
\label{Lem_Standard}
For every standard sequence $\uu$ there is a uniquely given \emph{directive sequence} $\mathbf{\Delta} = \Delta_0 \Delta_1 \Delta_2 \cdots \in \{ G, D \}^\N$ of morphisms and a sequence $(\uu^{(n)})$ of standard sequences such that
$$
\uu = {\Delta_0 \Delta_1 \ldots \Delta_{n-1}} \left( \uu^{(n)} \right) \, \ \text{for every } \ n \in \N\,.
$$
Both $G$ and $D$ occur in the sequence $\mathbf{\Delta}$ infinitely often.
\end{proposition}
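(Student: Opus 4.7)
My plan is to prove the existence, uniqueness and ``infinitely often'' clauses by iterated desubstitution. The key reduction is to show that every standard sequence $\uu$ admits a unique decomposition $\uu = \Delta_0(\uu^{(1)})$ with $\Delta_0 \in \{G, D\}$ and $\uu^{(1)}$ again standard; applying this reduction inductively will produce the directive sequence $\mathbf{\Delta}$ and the family $(\uu^{(n)})_{n \in \N}$, with uniqueness propagating from each individual step.

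For the desubstitution step, I would start from the fact that a Sturmian sequence is both aperiodic and balanced. Aperiodicity rules out $\uu \in \{({\tt ab})^\omega, ({\tt ba})^\omega\}$, while balance forbids ${\tt aa}$ and ${\tt bb}$ from simultaneously being factors of $\uu$ (the two windows already differ in the count of ${\tt a}$ by $2$). Hence exactly one of ${\tt aa}, {\tt bb}$ is absent from $\LL(\uu)$. In the case ${\tt bb} \notin \LL(\uu)$, every occurrence of ${\tt b}$ in $\uu$ is immediately followed by ${\tt a}$, so $\uu$ factorises uniquely into blocks ${\tt a}$ and ${\tt ab}$; reading blocks as letters yields $\uu = G(\uu^{(1)})$ for a unique binary sequence $\uu^{(1)}$. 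Symmetrically, ${\tt aa} \notin \LL(\uu)$ produces $\uu = D(\uu^{(1)})$. This determines $\Delta_0$ and $\uu^{(1)}$ uniquely.

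Next I would verify that $\uu^{(1)}$ is itself standard. That $\uu^{(1)}$ is Sturmian follows from the well-known fact that $G$ and $D$ are invertible Sturmian morphisms whose inverse branches preserve Sturmianness. For the standard property, I would invoke --- and, if needed, check directly using the commutations $G({\tt a}w) = {\tt a}G(w)$, $G({\tt b}w) = {\tt a}{\tt b}G(w)$ and their $D$-analogues --- the fact that $G$ and $D$ each preserve the class of standard sequences in both directions; hence $\uu = G(\uu^{(1)})$ being standard entails $\uu^{(1)}$ being standard, and analogously in the $D$-case.

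The remaining claim --- that each of $G, D$ appears in $\mathbf{\Delta}$ infinitely often --- is what I expect to be the main technical obstacle. I would argue by contradiction: suppose $\Delta_n = G$ for all $n \geq N$, and let $\theta_n$ denote the frequency of ${\tt b}$ in $\uu^{(n)}$. The relation $\uu^{(n)} = G(\uu^{(n+1)})$ gives $\theta_n = \theta_{n+1}/(1 + \theta_{n+1})$, equivalently $\theta_{n+1} = \theta_n/(1 - \theta_n) > \theta_n$. Moreover $\theta_n < 1/2$ for each $n \geq N$, since $\theta_n \geq 1/2$ would make ${\tt b}$ the majority letter in the balanced $\uu^{(n)}$, forcing ${\tt bb}$ to appear and precluding the $G$-desubstitution. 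Hence $(\theta_n)_{n \geq N}$ is strictly increasing and bounded above, so it converges to some $\theta^\ast$; any such limit is a fixed point of $\theta \mapsto \theta/(1-\theta)$, whose only solution in $[0, 1)$ is $0$, contradicting $\theta^\ast \geq \theta_N > 0$. The symmetric argument excludes $\Delta_n = D$ for all large $n$, completing the proof.
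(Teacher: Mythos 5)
The paper offers no proof of this proposition --- it is imported verbatim from Justin and Pirillo \citep{JuPi02} --- so there is no in-paper argument to compare against; your desubstitution proof is the classical one and is essentially sound. In particular, the observation that balance plus aperiodicity forces exactly one of $\mathtt{aa}$, $\mathtt{bb}$ to be absent (pinning down $\Delta_0$ uniquely), and the frequency recursion $\theta_{n+1}=\theta_n/(1-\theta_n)$ ruling out an eventually constant directive sequence (even more simply: $1/\theta_{n+1}=1/\theta_n-1$ would become negative after finitely many steps), are both correct. Two points need tightening. First, in the case $\mathtt{bb}\notin\LL(\uu)$, the factorisation into the blocks $\mathtt{a}=G(\mathtt{a})$ and $\mathtt{ab}=G(\mathtt{b})$ requires every $\mathtt{b}$ to be \emph{preceded} by $\mathtt{a}$ and, crucially, requires $\uu$ to begin with $\mathtt{a}$: a non-standard Sturmian sequence with $\mathtt{bb}\notin\LL(\uu)$ that starts with $\mathtt{b}$ does not $G$-desubstitute at all. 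This is precisely where standardness must enter the existence argument (if $\uu$ began with $\mathtt{b}$, then $\mathtt{b}\uu$ being Sturmian with the same language would force $\mathtt{bb}\in\LL(\uu)$), and your sketch omits it. Second, the assertion that $G$ and $D$ reflect the standard property (i.e.\ that $\uu=G(\uu^{(1)})$ standard implies $\uu^{(1)}$ standard) is flagged but not argued; it does require a short proof, most cleanly via the characterisation of standard sequences as those whose left special factors are exactly their prefixes, or by checking that $\mathtt{a}\uu^{(1)}$ and $\mathtt{b}\uu^{(1)}$ are Sturmian using the absence of $\mathtt{bb}$. Both repairs are routine, and the overall structure of your argument is correct.
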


If $\Delta_0 = D$, then by Proposition~\ref{Lem_Standard} $\uu$ is the image of a standard sequence under the morphism $D$ and consequently, $\mathtt{b}$ is the most frequent letter in $\uu$. Otherwise,  $\mathtt{a}$ is the most frequent letter in $\uu$.  We adopt the convention that $\rho_{\tt b}(\uu) > \rho_{\tt a}(\uu)$ and thus the directive sequence of $\uu$ starts with $D$. Let us write this sequence in the run-length encoded form  $\mathbf{\Delta} = D^{a_1} G^{a_2} D^{a_3} G^{a_4} \cdots$,
where all integers $a_n$ are positive. Then the number $\theta$ having the continued fraction expansion 
$\theta = [0, a_1, a_2, a_3, \ldots]$ equals the ratio 
$\frac{\rho_{\tt a}(\uu)}{\rho_{\tt b}(\uu)}$ (see \citep{BeSe02}) and thus $\theta=\theta(\uu)$.

Knowing the coefficients of the continued fraction of the slope of $\uu$, one can find prefixes of $\uu$. The more initial coefficients of $\theta(\uu)$ we have, the longer prefixes of $\uu$ we can reconstruct. In particular, if the directive sequences of $\uu$ starts with $D^{a_1}$, then $\uu=\mathtt{b}^{a_1}\mathtt{a}\cdots$. 

\begin{example}\label{ex:factors} 
Consider the standard sequence $\uu$ with the slope $\theta=[0,1,3,\overline{2}]$.  As any sequence of the form $G^2(\ww)$ starts with $\texttt{aab}$, the following word is a prefix of $\uu$:
\begin{multline*}
DG^3D^2(\mathtt{aab}) = DG^3(\mathtt{bbabbab}) = D(\mathtt{a^3ba^3ba^4ba^3ba^4b}) =\\
= \mathtt{bababab}\,\mathtt{bababab}\,\mathtt{babababab}\,
\mathtt{bababab}\, \mathtt{babababab}.
\end{multline*}
\end{example}
The convergents  to the continued fraction of $\theta$,  usually  denoted $\frac{p_N}{q_N}$,   and their secondary convergents have a close relation to the return words in a Sturmian sequence. Recall that the sequences $(p_N)$ and $(q_N)$ both satisfy the recurrence relation 
\begin{equation}\label{eq:convergents}
X_{N+1}=a_{N+1}X_N+X_{N-1}
\end{equation}
with initial conditions $p_{-1}=1$, $p_0=0$ and $q_{-1}=0$, $q_0=1$.  Two consecutive convergents satisfy $p_{N}q_{N-1}-p_{N-1}q_N = (-1)^{N+1}$ for every $N\in \N$.

Vuillon~\citep{Vui01} showed that an infinite recurrent sequence $\uu$ is Sturmian if and only if each of its factors has exactly two return words. Moreover, the derived sequence of a Sturmian sequence to any its factor is also Sturmian. 

All bispecial factors of any standard sequence $\uu$ are its prefixes. So, one of the return words to a bispecial factor of $\uu$ is a prefix of $\uu$.

\begin{proposition}[{\citep{DvMePe20}}]\label{prop:returnWords}
\label{ParikhRSB1}
Suppose that $\uu$ is a standard sequence with the slope $\theta = [0, a_1,a_2, a_3, \ldots]$ and $z$ is a bispecial factor of $\uu$. Let $r$ (resp., $s$) denote the return word to $z$ which is (resp., is not) a prefix of $\uu$. Then
\begin{enumerate}
\item there exists a unique pair $(N,m) \in \N^2$ with $0 \le m < a_{N+1}$ such that the Parikh vectors of $r$, $s$, and $z$ are respectively
$$
\vec{V}(r) = \begin{pmatrix} p_N \\ q_N \end{pmatrix},
\; \; 
\vec{V}(s) = \begin{pmatrix} m \, p_{N} + p_{N-1} \\ m \, q_{N} + q_{N-1} \end{pmatrix},
\; \; 
\vec{V}(z) = \vec{V}(r) + \vec{V}(s) - \begin{pmatrix} 1 \\ 1 \end{pmatrix};
$$
\item the slope of the derived sequence $\dd_\uu(z)$ is
$${\theta}'=[0, a_{N+1} - m, a_{N+2}, a_{N+3}, \ldots].$$
\end{enumerate}
\end{proposition}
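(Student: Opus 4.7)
The plan is to prove both parts of the proposition by simultaneous induction on the length of the bispecial factor $z$, using two central facts from the theory of Sturmian sequences: Vuillon's theorem~\citep{Vui01} that the derived sequence $\dd_\uu(z)$ is itself Sturmian, and the hierarchical description of standard Sturmian sequences via directive sequences given by Proposition~\ref{Lem_Standard}.

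For the base case $z=\varepsilon$, the two return words are the letters, so $r=\mathtt{b}$ (the prefix one, since $\uu$ begins with $\mathtt{b}$ as the directive sequence begins with $D$) and $s=\mathtt{a}$, with Parikh vectors $(0,1)=(p_0,q_0)$ and $(1,0)=(p_{-1},q_{-1})$ respectively. This matches $(N,m)=(0,0)$, the identity $\vec{V}(\varepsilon)=(0,0)=\vec{V}(r)+\vec{V}(s)-(1,1)$ is immediate, and $\dd_\uu(\varepsilon)=\uu$ has slope $[0,a_1,a_2,\ldots]=[0,a_{N+1}-m,a_{N+2},\ldots]$.

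For the inductive step, I would suppose the claim for a bispecial $z$ with parameters $(N,m)$, return words $r$ (prefix) and $s$, and derived sequence $\dd=\dd_\uu(z)$ standard Sturmian with slope $\theta'=[0,a_{N+1}-m,a_{N+2},\ldots]$; the letter $R$ of $\dd$ encoding $r$ plays the role of~$\mathtt{b}$ in~$\dd$. The next bispecial $z^\ast$ of $\uu$ corresponds to the next bispecial of $\dd$ after~$\varepsilon$, identified by applying the base case inside~$\dd$. Two subcases arise. If $m+1<a_{N+1}$, the leading partial quotient of $\theta'$ is at least~$2$, so the directive sequence of~$\dd$ begins with~$D$ and the prefix/non-prefix roles of return words survive the lift back to~$\uu$, yielding $\vec{V}(r^\ast)=\vec{V}(r)=(p_N,q_N)$ and $\vec{V}(s^\ast)=\vec{V}(r)+\vec{V}(s)=((m+1)p_N+p_{N-1},(m+1)q_N+q_{N-1})$, matching $(N,m+1)$, with new derived slope $[0,a_{N+1}-(m+1),a_{N+2},\ldots]$. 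If $m+1=a_{N+1}$, then $\theta'=[0,1,a_{N+2},\ldots]$; the continued-fraction simplification $[0,1,x,\ldots]=[0,x+1,\ldots]$ triggers a transition to the next block in the indexing, swapping the prefix role upon the lift back to~$\uu$ and giving $\vec{V}(r^\ast)=\vec{V}(r)+\vec{V}(s)=(a_{N+1}p_N+p_{N-1},a_{N+1}q_N+q_{N-1})=(p_{N+1},q_{N+1})$ and $\vec{V}(s^\ast)=\vec{V}(r)=(p_N,q_N)$, matching $(N+1,0)$ with new derived slope $[0,a_{N+2},a_{N+3},\ldots]$. In both subcases the identity $\vec{V}(z^\ast)=\vec{V}(r^\ast)+\vec{V}(s^\ast)-(1,1)$ is inherited: it holds for $\varepsilon$ in~$\dd$ trivially, and the standard fact that every bispecial of a standard Sturmian sequence is a palindrome containing the previous bispecial $z$ as both prefix and suffix transfers the identity back to~$\uu$ via the convergent recurrence~\eqref{eq:convergents}.

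The main technical obstacle is the bookkeeping of which return word retains the prefix role across an inductive step: in the within-block case it is preserved, while at block transitions it swaps, driven precisely by the continued-fraction simplification $[0,1,x,\ldots]=[0,x+1,\ldots]$ applied to $\theta'$. Once this is handled correctly, the new Parikh vectors match the convergent recurrence~\eqref{eq:convergents}, the index $(N,m)$ advances either to $(N,m+1)$ or to $(N+1,0)$, and the slope update for $\dd_\uu(z^\ast)$ is one step of Euclidean descent on~$\theta'$.
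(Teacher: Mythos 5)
The paper does not prove this proposition; it is imported verbatim from \citep{DvMePe20}, so there is no internal proof to compare against, and your attempt must be judged on its own. Your inductive scheme --- walking along the chain of bispecial prefixes, identifying the return words to the next bispecial $z^\ast$ of $\uu$ with the lifted return words to the single-letter bispecial $R$ of the derived sequence $\dd=\dd_\uu(z)$, and splitting into the cases $m+1<a_{N+1}$ and $m+1=a_{N+1}$ --- is the natural route, and in outline it does deliver both items: the base case, the advancement of $(N,m)$ to $(N,m{+}1)$ or $(N{+}1,0)$, and the Parikh identity all check out. (For the last of these, palindromicity is a red herring: what you actually need is that expanding the bispecial letter $R$ of $\dd$ gives $z^\ast=\lambda(R)z=rz$, where $\lambda$ is the substitution $R\mapsto r$, $S\mapsto s$; the identity $\vec{V}(z^\ast)=\vec{V}(r^\ast)+\vec{V}(s^\ast)-(1,1)$ then follows from the induction hypothesis since $\{\vec{V}(r^\ast),\vec{V}(s^\ast)\}=\{\vec{V}(r),\vec{V}(r)+\vec{V}(s)\}$.)

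The one concretely wrong step is the mechanism you cite for the block transition: the identity $[0,1,x,\ldots]=[0,x+1,\ldots]$ is false for infinite continued fractions. For instance $[0,1,1,1,\ldots]=\varphi\approx 0.618$ while $[0,2,1,1,\ldots]=\varphi^2\approx 0.382$; the true relation is that these two numbers sum to $1$, reflecting a swap of the two letter frequencies, not an equality of slopes. The swap of the prefix role is not driven by any simplification of $\theta'$ but by the directive sequence of $\dd$: when $a_{N+1}-m\ge 2$ that directive sequence begins with $DD$, so $\dd$ begins with $RR$ and the prefix return word to $R$ is $R$ itself, whereas when $a_{N+1}-m=1$ it begins with $DG$, so $\dd$ begins with $RS$ and the prefix return word to $R$ is $RS$; in the latter case $\dd_\dd(R)$ is the sequence directed by $G^{a_{N+2}}D^{a_{N+3}}\cdots$ with the roles of the two letters exchanged, which is what produces the slope $[0,a_{N+2},a_{N+3},\ldots]$. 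Note also that your phrase ``the directive sequence of $\dd$ begins with $D$'' does not discriminate between the two cases, since under the paper's convention it begins with $D$ in both. With this repair, and granting the standard facts you invoke without proof (the derived sequence of a standard sequence with respect to a bispecial prefix is again standard, and return words lift through derivation), the argument closes; the uniqueness of $(N,m)$, which you do not address, follows from the strict monotonicity of the convergents.
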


\begin{lemma}[{\citep{DDP21}}]
\label{lem_kl}
Let $\uu$ be a Sturmian sequence with the slope $\theta=\frac{\rho_{\tt a}(\uu)}{\rho_{\tt b}(\uu)}<1$.
Then $\uu$ contains a factor $w$ such that $|w|_{\tt b} = k$ and $|w|_{\tt a} = \ell$ if and only if
\begin{equation}
\label{pocet}
(k-1) \theta - 1 < \ell < (k+1) \theta + 1 \ \ \text{and} \ k, \ell \in \N.
\end{equation}
\end{lemma}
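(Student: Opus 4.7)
The plan is to reformulate the inequalities in terms of letter frequencies and then apply balance. Setting $n=k+\ell$ and using $\rho_{\tt a}(\uu)=\theta/(1+\theta)$ (which follows from $\rho_{\tt a}(\uu)+\rho_{\tt b}(\uu)=1$ and $\theta=\rho_{\tt a}(\uu)/\rho_{\tt b}(\uu)$), the double inequality $(k-1)\theta - 1 < \ell < (k+1)\theta + 1$ rewrites as $|\ell - k\theta| < 1+\theta$, and after dividing by $1+\theta$ becomes
$$
|\ell - n\rho_{\tt a}(\uu)| < 1.
$$
Since $\ell\in\N$ and $n\rho_{\tt a}(\uu)$ is irrational for $n\geq 1$, this is equivalent to $\ell \in \{\lfloor n\rho_{\tt a}(\uu)\rfloor, \lceil n\rho_{\tt a}(\uu)\rceil\}$ (for $n=0$ both values equal $0$). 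Hence the lemma reduces to the claim that, for every $n\in\N$, the set $\{|w|_{\tt a}\colon w\in\mathcal{L}(\uu),\ |w|=n\}$ equals $\{\lfloor n\rho_{\tt a}(\uu)\rfloor, \lceil n\rho_{\tt a}(\uu)\rceil\}$.

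For the forward direction I would invoke balance: since $\uu$ is Sturmian, hence balanced, the $\tt a$-counts of length-$n$ factors lie in some $\{m, m+1\}$. To pin down $m$, I would observe that the Ces\`aro-type average $\frac{1}{N}\sum_{i=0}^{N-1}|u_i u_{i+1}\cdots u_{i+n-1}|_{\tt a}$ differs from $n\cdot\frac{|u_0\cdots u_{N-1}|_{\tt a}}{N}$ by a boundary contribution of order $n^2/N$, hence tends to $n\rho_{\tt a}(\uu)$ as $N\to\infty$. Since every summand lies in $[m,m+1]$, so does the limit, and irrationality of $n\rho_{\tt a}(\uu)$ forces $m=\lfloor n\rho_{\tt a}(\uu)\rfloor$. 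Any realised count $\ell=|w|_{\tt a}$ then lies in the required two-element set, and the inequalities follow by the reformulation above.

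For the backward direction only the attainment of both values for $n\geq 1$ is non-trivial; the case $n=0$ is settled by the empty factor. Arguing by contradiction, if only one value $m$ occurred for some $n\geq 1$, then every length-$Nn$ prefix of $\uu$ would split into $N$ consecutive length-$n$ blocks each contributing exactly $m$ letters $\tt a$, forcing $\rho_{\tt a}(\uu)=m/n\in\Q$, contradicting the irrationality of Sturmian letter frequencies recalled in Section~\ref{sec:sturmian}. Hence both $\lfloor n\rho_{\tt a}(\uu)\rfloor$ and $\lceil n\rho_{\tt a}(\uu)\rceil$ are attained, and given $(k,\ell)$ satisfying the inequalities we select the factor realising the value $\ell$.

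The only mild obstacle is the identification $m=\lfloor n\rho_{\tt a}(\uu)\rfloor$ in the forward step, since it requires connecting the balance-given interval $\{m,m+1\}$ to the global frequency. The averaging argument sketched above, combined with irrationality, should dispose of it in a line or two; everything else is algebraic manipulation and a direct application of the balance property.
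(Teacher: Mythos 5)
Your argument is correct and complete, but there is nothing in the paper to compare it against: Lemma~\ref{lem_kl} is imported verbatim from \citep{DDP21} and the present paper gives no proof of it. Taken on its own terms, your proof works. The reformulation is right: with $n=k+\ell$ and $\rho_{\tt a}(\uu)=\theta/(1+\theta)$, the two inequalities collapse to $|\ell-k\theta|<1+\theta$, which after dividing by $1+\theta$ is exactly $|\ell-n\rho_{\tt a}(\uu)|<1$, i.e.\ $\ell\in\{\lfloor n\rho_{\tt a}(\uu)\rfloor,\lceil n\rho_{\tt a}(\uu)\rceil\}$ by irrationality of $n\rho_{\tt a}(\uu)$ for $n\ge1$. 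The forward direction (balance confines the counts to $\{m,m+1\}$; the sliding-window Ces\`aro average pins $m=\lfloor n\rho_{\tt a}(\uu)\rfloor$ up to an $O(n^2/N)$ boundary term) and the backward direction (a single attained value would force $\rho_{\tt a}(\uu)=m/n\in\Q$ via the block decomposition of prefixes of length $Nn$) are both sound, and they use only facts the paper itself records in Section~\ref{sec:sturmian}: Sturmian sequences are balanced and their letter frequencies exist and are irrational. The usual proof of this statement (and presumably the one in \citep{DDP21}) goes through the rotation/mechanical representation $|u_i\cdots u_{i+n-1}|_{\tt a}=\lfloor(i+n)\alpha+\beta\rfloor-\lfloor i\alpha+\beta\rfloor$ together with equidistribution; your route trades that machinery for an elementary averaging argument, which fits the paper's toolkit better, at the cost of having to justify the $m=\lfloor n\rho_{\tt a}(\uu)\rfloor$ identification by hand --- which you do.
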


\subsection{Balanced sequences}
\label{Section_BalancedSturmian1}

In 2000 Hubert~\citep{Hubert00} characterized balanced sequences over alphabets of cardinality bigger than 2 in terms of Sturmian sequences, \textit{colourings}, and \textit{constant gap sequences}.

\begin{definition}\label{def:colouring}
Let $\uu$ be a sequence over $\{\mathtt{a},\mathtt{b}\}$, $\yy$ and $\yy'$ be arbitrary sequences. The \emph{colouring}\footnote{This operation is also known as \textit{shuffling} of $\yy$ and $\yy'$ with \textit{directive sequence} $\uu$} of $\uu$ by $\yy$ and $\yy'$ is the sequence $\vv = \barva( \uu, \yy, \yy')$ obtained from $\uu$ by replacing the subsequence of all $\mathtt{a}$'s with $\yy$ and the subsequence of all $\mathtt{b}$'s with $\yy'$.
\end{definition}


\begin{definition}
\label{Def_ConstantGap}
A~sequence $\yy$ is a \emph{constant gap sequence} if for each letter $a$ occurring in $\yy$ there is a positive integer denoted by $\gap_{\yy}(a)$ such that the distance between any consecutive occurrences of $a$ in $\yy$ is  ${\rm gap}_{\yy}(a)$.
\end{definition}

Obviously, every constant gap sequence $\yy$ is periodic and $\per(\yy)$ is the least common multiple of all numbers $\gap_{\yy}(a)$.

\begin{example}
\label{ex:sequences}
The sequence $\yy = (\mathtt{0102})^\omega$ is a constant gap sequence because the distance between consecutive $\mathtt{0}$'s is always $2$, while the distance between consecutive $\mathtt{1}$'s (resp., $\mathtt{2}$'s) is always $4$. Its minimal period is $\rm{Per}(\yy) = 4$. 
 
The sequence $\vv = \barva( \uu_f, (\texttt{AB})^\omega, (\texttt{0102})^\omega)$, where $\uu_f$ is defined in Example~\ref{ex:FiboDef}, looks as follows:
\begin{align*}
\uu_f &= \mathtt{babbababbabbababbababbabbababba}\cdots \\
\vv &= \mathtt{ \textcolor{red}{0}A\textcolor{red}{10}B\textcolor{red}{2}A\textcolor{red}{01}B\textcolor{red}{02}A\textcolor{red}{0}B\textcolor{red}{10}A\textcolor{red}{2}B\textcolor{red}{01}A\textcolor{red}{02}B\textcolor{red}{0}A\textcolor{red}{10}B} \cdots
\end{align*}
\end{example}

\begin{theorem}[{\citep{Hubert00}}]
\label{Hubert}
A recurrent aperiodic sequence $\vv$ is balanced if and only if $\vv = \barva( \uu, \yy, \yy')$ for some Sturmian sequence $\uu$ and constant gap sequences ${\yy}, {\yy'}$ over two disjoint alphabets.
\end{theorem}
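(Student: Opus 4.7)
The plan is to handle the two directions separately: the \emph{(if)} direction is a direct verification that the coloring construction preserves the balance property, while the \emph{(only if)} direction is a structural theorem that reconstructs the Sturmian ``skeleton'' $\uu$ and the two constant-gap palettes $\yy, \yy'$ from the combinatorial data of $\vv$.

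For the \emph{(if)} direction, assume $\vv = \barva(\uu, \yy, \yy')$ with $\uu$ Sturmian and $\yy, \yy'$ constant-gap on disjoint alphabets $\A_1, \A_2$. Fix two factors $p, q$ of $\vv$ with $|p|=|q|$ and a letter $c$, say $c \in \A_1$ with $\gap_{\yy}(c) = g$. Let $K_p$ (resp.\ $K_q$) denote the number of $\A_1$-letters inside the window defining $p$ (resp.\ $q$); equivalently, $K_p$ is the number of $\mathtt a$'s in the corresponding factor of $\uu$. Balancedness of $\uu$ yields $|K_p - K_q|\le 1$, and since $c$ has gap $g$ in $\yy$, any $K$ consecutive letters of $\yy$ contain either $\lfloor K/g\rfloor$ or $\lceil K/g\rceil$ occurrences of $c$. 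A short case analysis, distinguishing $K_p = K_q$ from $|K_p - K_q|=1$ and in the latter case examining the residue of $K_p$ modulo $g$, shows that $\bigl||p|_c - |q|_c\bigr|\le 1$ in every case.

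For the \emph{(only if)} direction, assume $\vv$ is recurrent, aperiodic, and balanced over a finite alphabet $\A$. Since every recurrent balanced sequence is uniformly recurrent, each letter has a well-defined frequency. I would first argue, using balancedness, that the letter frequencies of $\vv$ lie on only two $\Q$-rays in $\mathbb{R}_{\ge 0}$: there exist an irrational $\alpha \in (0,1)$ and a partition $\A = \A_1\sqcup\A_2$ such that $\rho_c$ is a positive rational multiple of $\alpha$ for $c \in \A_1$ and of $1-\alpha$ for $c\in\A_2$; the existence of three $\Q$-linearly independent frequencies would allow the construction of two equal-length factors with some letter-count differing by at least $2$. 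Projecting via $\A_1\mapsto\mathtt a$, $\A_2\mapsto\mathtt b$ gives a binary sequence in which $\mathtt a$ has irrational frequency $\alpha$; using balance of $\vv$ together with the rational commensurability inside each class, one verifies that the projection is balanced and aperiodic, hence Sturmian by Morse--Hedlund, and this is the desired $\uu$. Reading the $\A_1$-letters of $\vv$ in order yields a candidate $\yy$ (and symmetrically $\yy'$), and the remaining task is to verify that each letter of $\A_1$ occurs in $\yy$ at constant gaps.

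The main obstacle is this last step: proving that the extracted $\yy$ has \emph{constant} gaps for every letter of $\A_1$, not merely a periodic occurrence pattern. The plan is an argument by contradiction: if two occurrences of some $c \in \A_1$ in $\yy$ were separated by distances $g \ne g'$, we lift both occurrences back to $\vv$ via the Sturmian projection and locate two factors of $\vv$ of equal length whose $c$-counts differ by at least $2$, contradicting balancedness. This alignment depends delicately on how the Sturmian slope $\alpha/(1-\alpha)$ interleaves $\A_1$- and $\A_2$-letters within common-length windows of $\vv$, and carrying it out rigorously is the technical heart of the argument in~\citep{Hubert00}.
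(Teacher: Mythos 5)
First, a contextual note: the paper does not prove this statement at all --- it is Hubert's characterization, imported verbatim from \citep{Hubert00} --- so there is no in-paper proof to compare against. Judged on its own, your \emph{(if)} direction is correct and is the standard argument: a window of $K$ consecutive letters of a constant gap sequence contains $\lfloor K/g\rfloor$ or $\lceil K/g\rceil$ occurrences of a letter with gap $g$, the Sturmian skeleton forces the two values of $K$ for equal-length windows of $\vv$ to differ by at most $1$, and the case analysis you indicate does close (the difference of the counts never exceeds $1$).

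Your \emph{(only if)} direction, however, is an outline with genuine gaps rather than a proof. (a) The claim that the letter frequencies lie on exactly two rational rays over a single irrational $\alpha$ is asserted, not established; the parenthetical justification (``three $\Q$-linearly independent frequencies would allow\dots'') is itself an unproved construction, and this two-class structure is essentially the whole content of the theorem. (b) Balancedness of the projection $\pi(\vv)$ does \emph{not} follow by summing the balance inequality over the letters of $\A_1$: each individual letter count may deviate by $1$, so two equal-length windows a priori have $\mathtt a$-counts differing by up to $\#\A_1$, and the phrase ``rational commensurability inside each class'' does not by itself repair this. (c) The constant-gap property of the extracted $\yy$ --- which you correctly identify as the technical heart --- is explicitly deferred, and the proposed contradiction (lifting two unequal gaps back to $\vv$ to find equal-length factors with $c$-counts differing by $2$) is exactly the step that needs the quantitative interaction with the Sturmian slope. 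The missing ingredient that drives all three steps in Hubert's actual argument is the lemma (going back to Graham and Morse--Hedlund) that in a balanced sequence the indicator sequence of \emph{each single letter} is itself a balanced binary sequence, hence mechanical (a Beatty sequence with slope $\rho_c$ and some intercept); one then groups letters whose Beatty sequences synchronize into a common mechanical sequence, proves there are exactly two groups, and obtains the Sturmian $\uu$ and the constant gaps of $\yy,\yy'$ simultaneously from that synchronization. Without introducing this structure, your plan for the converse cannot be completed as written.
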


\begin{corollary} \label{c:irratfreq}
A letter $a$ from $\yy$ (resp., $b$ from $\yy'$) occurs in $\vv = \barva( \uu, \yy, \yy')$ with frequency $\rho_a(\vv)=\frac{\rho_{\tt a}(\uu)}{\gap_{\yy}(a)}$ (resp., $\rho_b(\vv)=\frac{\rho_{\tt b}(\uu)}{\gap_{\yy}(b)}$). In particular, all frequencies of letters in aperiodic balanced sequences are irrational.
\end{corollary}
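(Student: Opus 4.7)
The plan is to exploit the defining property of colouring by counting occurrences in corresponding prefixes of $\vv$ and $\yy$, then pass to the limit. Fix a letter $a$ occurring in $\yy$ and consider the prefix $v_0\cdots v_{n-1}$ of $\vv$. By Definition~\ref{def:colouring}, the subsequence of $\vv$ extracted at the positions where $\uu$ has $\mathtt{a}$ equals $\yy$. Hence, writing $k_n = |u_0\cdots u_{n-1}|_{\mathtt{a}}$, the letters from the alphabet of $\yy$ appearing inside $v_0\cdots v_{n-1}$ are precisely $y_0 y_1 \cdots y_{k_n-1}$, which gives the key identity $|v_0\cdots v_{n-1}|_a = |y_0\cdots y_{k_n-1}|_a$.

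Next I would factor this count as
$$
\frac{|v_0\cdots v_{n-1}|_a}{n} \ = \ \frac{|y_0\cdots y_{k_n-1}|_a}{k_n}\cdot \frac{k_n}{n}
$$
and handle the two limits separately. The second factor converges to $\rho_{\tt a}(\uu)$ by the definition of letter frequency, which exists because $\uu$ is Sturmian and hence uniformly recurrent. Since $\yy$ is a constant gap sequence, the number of $a$'s among $y_0,\ldots,y_{m-1}$ equals $\lfloor m/\gap_\yy(a)\rfloor$ with an additive error bounded by $1$, so the first factor tends to $1/\gap_\yy(a)$; here I use that $k_n\to\infty$, which holds because $\rho_{\tt a}(\uu)>0$. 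Multiplying the two limits yields $\rho_a(\vv) = \rho_{\tt a}(\uu)/\gap_\yy(a)$. The formula for $b$ occurring in $\yy'$ is proved by exactly the same argument with $\mathtt{b}$ and $\yy'$ in place of $\mathtt{a}$ and $\yy$.

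For the irrationality statement, I would invoke Theorem~\ref{Hubert} so that any aperiodic recurrent balanced sequence $\vv$ is $\barva(\uu,\yy,\yy')$ for some Sturmian $\uu$ and constant gap $\yy,\yy'$. As recalled in Section~\ref{sec:sturmian}, the letter frequencies of every Sturmian sequence are irrational (they are essentially the slope and its complement). Since $\gap_\yy(a)$ and $\gap_{\yy'}(b)$ are positive integers, dividing an irrational number by either of them preserves irrationality, and the explicit formula above then delivers an irrational frequency for every letter of $\vv$. I do not foresee any real obstacle: the only point worth care is the separate treatment of the two factors in the limit, which the decomposition above already makes transparent.
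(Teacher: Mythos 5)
Your proof is correct. The paper states this corollary without any proof, and your argument --- the identity $|v_0\cdots v_{n-1}|_a = |y_0\cdots y_{k_n-1}|_a$, the factorization of the frequency into the density of $\mathtt{a}$-positions in $\uu$ times the density of $a$ within $\yy$, and then Hubert's theorem combined with the irrationality of Sturmian letter frequencies and the integrality of $\gap_{\yy}(a)$ --- is precisely the routine verification the authors leave implicit.
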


\noindent\textit{Example~\ref{ex:sequences} (continued).}
By Theorem~\ref{Hubert}, the sequence $\vv$ is balanced. Knowing the frequencies of letters in $\uu_f$ from Example~\ref{ex:FiboDef}, we can compute the frequencies of letters in $\vv$ by Corollary~\ref{c:irratfreq}; e. g., $\rho_{\tt 0}(\vv)= \varphi/2$.

\medskip
Let $\A,\B$ be two disjoint alphabets. The ``discolouration map'' $\pi$ is defined for any word or sequence over $\A\cup \B$; it replaces all letters from $\A$ by $\mathtt{a}$ and all letters from $\B$ by $\mathtt{b}$. If  $\vv = \barva( \uu, \yy, \yy')$, where $y\in \A^{\mathbb{N}}$, $y'\in \B^{\mathbb{N}}$, then $\pi(\vv)=\uu$ and $\pi(v) \in \mathcal{L}(\uu)$ for every $v \in \LL(\vv)$.





\section{Lower Bounds on Critical Exponent}
First we prove a simple but useful property.

\begin{lemma} \label{l:distance}
Let $\vv$ be an aperiodic balanced sequence, $a$ be a letter in $\vv$. The set of distances between consecutive occurrences of $a$ in $\vv$ consists of two consecutive integers.
\end{lemma}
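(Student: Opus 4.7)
The plan is to reduce everything to properties of the underlying Sturmian sequence via Hubert's theorem. Observing first that $\vv$ is (uniformly) recurrent so that Theorem~\ref{Hubert} applies, we may write $\vv = \barva(\uu,\yy,\yy')$, with $\uu$ a Sturmian sequence and $\yy, \yy'$ constant gap sequences over disjoint alphabets $\A,\B$. By symmetry we can assume $a\in \A$. Set $g=\gap_\yy(a)$. Because $\yy$ is constant gap, the occurrences of $a$ in $\yy$ form an arithmetic progression $j_0, j_0+g, j_0+2g,\ldots$ for some $0\le j_0 < g$. Writing $(i_m)_{m\ge 0}$ for the positions of $\mathtt{a}$ in $\uu$, the coloring identifies positions in $\uu$ and $\vv$, so the occurrences of $a$ in $\vv$ are precisely at the positions $i_{j_0+kg}$ for $k\ge 0$.

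Hence the $k$-th gap between consecutive $a$'s in $\vv$ is $i_{j_0+(k+1)g} - i_{j_0+kg}$, which is the length of a factor of $\uu$ containing exactly $g$ letters $\mathtt{a}$. Since $\uu$ is Sturmian and therefore balanced, any two factors of $\uu$ with the same $\mathtt{a}$-count differ by at most $1$ in the $\mathtt{b}$-count, hence by at most $1$ in total length. Consequently, the set of gaps between consecutive $a$'s in $\vv$ is contained in $\{L,L+1\}$ for some positive integer $L$.

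What remains is to rule out that this set is a singleton. If all gaps equalled $L$, the letter $a$ would occur with true constant gap $L$ in $\vv$, forcing $\rho_a(\vv)=1/L\in \Q$; but Corollary~\ref{c:irratfreq} gives $\rho_a(\vv)=\rho_{\mathtt{a}}(\uu)/g$, which is irrational since letter frequencies in a Sturmian sequence are irrational. This contradiction shows that both values $L$ and $L+1$ actually occur, yielding the claim. I do not foresee a substantial obstacle: once Hubert's decomposition is in hand, the balance of $\uu$ bounds the gap difference and the irrationality of the Sturmian frequency handles the degenerate one-value case. The only mildly subtle point is the (implicit) reduction to the recurrent setting needed to invoke Theorem~\ref{Hubert}.
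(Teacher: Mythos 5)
Your reduction via Hubert's theorem is a genuinely different route from the paper, which never decomposes $\vv$: the paper argues directly that the minimal distance $k$ between consecutive $a$'s produces a factor of length $k+1$ containing two $a$'s, so by the balance property of $\vv$ itself \emph{every} factor of length $k+1$ contains an $a$ and no distance can exceed $k+1$; irrationality of $\rho_a(\vv)$ then rules out a single value. Your final step (irrationality excludes the singleton case) matches the paper's use of Corollary~\ref{c:irratfreq}.

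However, your key intermediate claim is false as stated: it is not true that any two factors of a Sturmian sequence $\uu$ with the same $\mathtt{a}$-count have $\mathtt{b}$-counts differing by at most $1$. The balance property constrains letter counts of factors of \emph{equal length}; it does not symmetrically constrain lengths of factors with equal count of one letter. Concretely, in the Fibonacci sequence both $\mathtt{a}$ and $\mathtt{bbabb}$ are factors containing exactly one $\mathtt{a}$, with $\mathtt{b}$-counts $0$ and $4$ (this is also visible in Lemma~\ref{lem_kl}: for fixed $\ell$ the admissible $k$ ranges over an interval of width about $2+2/\theta$). Your argument can be repaired because the factors you actually need are not arbitrary: each gap factor $z$ begins at an occurrence of $\mathtt{a}$ and is immediately followed by an occurrence of $\mathtt{a}$, so $z\mathtt{a}$ is a factor with $g+1$ letters $\mathtt{a}$; if another such factor $z'$ had $|z'|\ge |z|+2$, the suffix of $z'$ of length $|z|+1$ would miss the leading $\mathtt{a}$ of $z'$ and hence contain at most $g-1$ letters $\mathtt{a}$, contradicting balance against $z\mathtt{a}$. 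Without some anchoring argument of this kind the step fails, and with it the whole detour through Hubert's decomposition buys nothing over the paper's five-line direct proof (it even adds the recurrence issue you flag yourself).
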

\begin{proof}
Since the frequency of $a$ in $\vv$ is irrational by Corollary~\ref{c:irratfreq}, there should be at least two different distances between consecutive occurrences of $a$. Let $k$ be the minimal such distance. Then $\vv$ has a factor of length $k{+}1$ with two $a$'s. By the balance property, each factor of this length contains the letter $a$, so the distance between consecutive $a$'s cannot exceed $k{+}1$. Hence the set of distances is $\{k,k{+}1\}$, as required. 
\end{proof}

Now we state the main result of this section.

\begin{theorem}\label{t:lbound}
For each $d \ge 11$, there exists no $d$-ary balanced sequence $\vv$ with $\mathrm{E}(\vv)<\frac{d-1}{d-2}$.
\end{theorem}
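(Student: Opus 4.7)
The plan is to argue by contradiction. Assume $\vv$ is a $d$-ary balanced sequence with $\mathrm{E}(\vv)<\frac{d-1}{d-2}$. Since eventually periodic sequences have infinite critical exponent, $\vv$ must be aperiodic, and Theorem~\ref{Hubert} then yields $\vv=\barva(\uu,\yy,\yy')$ with $\uu$ Sturmian (slope $\theta<1$ by convention) and $\yy,\yy'$ constant gap sequences over disjoint alphabets $\A,\B$ with $|\A|+|\B|=d$. The hypothesis $\mathrm{E}(\vv)<\frac{d-1}{d-2}$ rules out, for every $p\in\{1,\ldots,d-2\}$, any factor of length $p+1$ with period $p$, since such a factor has exponent at least $\frac{p+1}{p}\ge\frac{d-1}{d-2}$. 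Equivalently, every length-$(d-1)$ factor of $\vv$ consists of $d-1$ pairwise distinct letters (the ``rainbow'' condition).

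Next I would pin down the structure of $\yy$ and $\yy'$. A length-$(d-1)$ window of $\vv$ contains $\lfloor(d-1)\rho_{\tt a}\rfloor$ or $\lceil(d-1)\rho_{\tt a}\rceil$ $\A$-letters (by Lemma~\ref{lem_kl} applied to $\uu$), both values being attained. Since the $\A$-letters in every such window form a factor of $\yy$ and are forced to be pairwise distinct, every $a\in\A$ must satisfy $\gap_{\yy}(a)\ge\lceil(d-1)\rho_{\tt a}\rceil$. Combining this with the elementary bound $\min_{a\in\A}\gap_{\yy}(a)\le|\A|$ (an immediate consequence of $\sum_{a\in\A}1/\gap_{\yy}(a)=1$), the analogous inequality for $\B$, and the identity $\lceil(d-1)\rho_{\tt a}\rceil+\lceil(d-1)\rho_{\tt b}\rceil=d$ (valid since $\rho_{\tt a},\rho_{\tt b}$ are irrational by Corollary~\ref{c:irratfreq}), the constraint $|\A|+|\B|=d$ forces all these inequalities to be tight. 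Consequently $|\A|=\lceil(d-1)\rho_{\tt a}\rceil$, every gap of $\yy$ equals $|\A|$, and symmetrically for $\B$, so that up to relabelling $\yy=(A_1\cdots A_{|\A|})^\omega$ and $\yy'=(B_1\cdots B_{|\B|})^\omega$.

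The third step exploits this rigid structure to produce a factor of $\vv$ with exponent at least $\frac{d-1}{d-2}$, contradicting the hypothesis. A match $\vv_i=\vv_{i+p}$ decomposes into $\uu_i=\uu_{i+p}$ together with a modular condition: the $\mathtt a$-count (resp.\ $\mathtt b$-count) of $\uu[i..i+p-1]$ must be divisible by $|\A|$ (resp.\ $|\B|$). I would then invoke Theorem~\ref{Prop_FormulaForCR}: it suffices to produce a bispecial factor $w$ of $\vv$ with a shortest return word $v$ such that $|w|/|v|\ge \frac{1}{d-2}$. My approach is to start from a bispecial factor $z$ of $\uu$ whose two return words have Parikh vectors $(p_N,q_N)$ and $(mp_N+p_{N-1},mq_N+q_{N-1})$ by Proposition~\ref{prop:returnWords}, and to track how $z$ and its return words lift to $\vv$ via the colouring, keeping account of the colour state modulo $(|\A|,|\B|)$.

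The main obstacle is this last step: one has to show, for $d\ge 11$, that the arithmetic interplay between the convergents $(p_N,q_N)$ of $\theta$ and the moduli $|\A|=\lceil(d-1)\rho_{\tt a}\rceil$, $|\B|=\lceil(d-1)\rho_{\tt b}\rceil$ always produces a bispecial $w$ of $\vv$ whose shortest return word $v$ satisfies $|w|/|v|\ge 1/(d-2)$. The plan is a case analysis on $|\A|$ together with the initial partial quotients of $\theta$, using the recurrence $X_{N+1}=a_{N+1}X_N+X_{N-1}$ to control $(p_N,q_N)$ modulo $(|\A|,|\B|)$. The threshold $d\ge 11$ should appear precisely here: for $d\le 10$ certain slopes $\theta$ evade the modular obstruction, which is why the larger bound $\frac{d-2}{d-3}$ conjectured in~\citep{RSV19} remains attainable in that range.
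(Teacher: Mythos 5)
Your first two steps are sound in outline, and the second is actually a nice observation not made explicit in the paper: the ``rainbow'' condition on length-$(d-1)$ factors gives $\gap_{\yy}(a)\ge\lceil(d-1)\rho_{\tt a}\rceil$ for every $a\in\A$, and together with $\sum_{a}1/\gap_{\yy}(a)=1$ and $\lceil(d-1)\rho_{\tt a}\rceil+\lceil(d-1)\rho_{\tt b}\rceil=d$ this does force $\yy$ and $\yy'$ to be pure cycles (though you still owe an argument that the minimal window of $\uu$ containing $\gap_{\yy}(a)+1$ letters $\mathtt{a}$ occurs in every colour phase; this needs something like the coprimality of the $\mathtt{a}$-counts of the two return words coming from Proposition~\ref{prop:returnWords}). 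The genuine gap is that everything after this is a plan, not a proof. The entire content of the theorem sits in your final paragraph --- ``one has to show \dots that the arithmetic interplay \dots always produces a bispecial $w$ whose shortest return word $v$ satisfies $|w|/|v|\ge 1/(d-2)$'' --- and you give no argument for it, only an intention to run a case analysis over $|\A|$ and the initial partial quotients of $\theta$. That analysis is a priori unbounded (all slopes, all pairs $(N,m)$), and nothing in the sketch indicates how it closes. Your diagnosis of where $d\ge 11$ enters is also mistaken: the lower bound $\frac{d-1}{d-2}$ holds for every $d\ge 4$ and is merely subsumed by the stronger known bound $\frac{d-2}{d-3}$ when $d\le 10$, so no slope ``evades the modular obstruction'' for small $d$.

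The paper closes the argument by a completely different and much lighter route. Besides your property (i), it extracts a second local property (ii): two consecutive occurrences of the same letter cannot be followed by the same letter, since a factor $abXab$ would force exponent at least $\frac{d+2}{d}>\frac{d-1}{d-2}$. Properties (i) and (ii) make $\vv$ a Pansiot sequence; in the cylindric representation, Lemma~\ref{l:distance} combined with a short analysis of how vertical and slanted sticks can meet (Lemma~\ref{l:traces}) shows that either every frequent letter recurs always at distance exactly $d-1$ or every rare letter recurs always at distance exactly $d+1$. Either way some letter has rational frequency $\frac{1}{d-1}$ or $\frac{1}{d+1}$, contradicting Corollary~\ref{c:irratfreq}. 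No return words, bispecial factors, or continued fractions are needed for the lower bound. If you want to salvage your route, the missing lever is an analogue of property (ii) and of Lemma~\ref{l:traces}; without them the modular bookkeeping you propose has nothing to push against.
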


Let us fix an arbitrary alphabet $\A$ with $d\ge 11$ letters and introduce the necessary tools\footnote{The argument below works for $d\ge 4$, but since for $d\le 10$ a stronger lower bound is known \citep{RSV19}, only the case $d\ge 11$ is of interest.}. Any sequence $\vv\in\A^{\mathbb{N}}$ with $\mathrm{E}(\vv)<\frac{d-1}{d-2}$ satisfies the following ``local'' properties:
\begin{itemize}
    \item[(i)] in each factor of $\vv$ of length $d-1$, all letters are distinct;
    \item[(ii)] any two consecutive occurrences of the same letter in $\vv$ are followed by different letters.
\end{itemize}
Property (i) is obvious. If (ii) fails, then $\vv$ has a factor $abXab$, where the word $X$ does not contain the letter $a$. Assume $X$ contains $b$: $X=YbZ$. By (i), $|bYb|, |bZab|\ge d$. Then $|bX|\ge 2d-3$, and $bX$ contains only $d-1$ distinct letters. Again by (i), $bX$ has the period $d-1$ and thus $\exp(bX)\ge \frac{2d-3}{d-1}$, which is impossible. Therefore $X$ contains neither of the letters $a,b$. Then  $|X|\le d-2$ by (i), implying 
\begin{equation} \label{eq:dplus2}
\mathrm{E}(\vv)\ge \exp(abXab)=\frac{|abXab|}{|abX|}\ge \frac{d+2}{d}> \frac{d-1}{d-2}\, ,   
\end{equation}
which is impossible. Thus, (ii) holds.

Following \citep{ShGo10}, we refer to the words and sequences satisfying (i) and (ii) as \emph{Pansiot words/sequences}. Note that (i) and (ii) imply that a factor of length $d+1$ should contain $d$ distinct letters. As a result, Pansiot words/sequences satisfy the property
\begin{itemize}
    \item[(iii)]  any two consecutive occurrences of the same letter are at the distance $d-1$, $d$, or $d+1$.
\end{itemize}

A handy tool for studying Pansiot words and sequences is the \emph{cylindric representation} introduced in \citep{ShGo10}. A  Pansiot word/sequence can be viewed as a rope with knots representing letters. This rope is wound around a cylinder such that the knots at distance $d$ are placed one under another. A part of a projection of such a cylinder is drawn in Fig.~\ref{f:cyl}, a. By (iii), the knots labeled by two consecutive occurrences of the same letter appear on two consecutive winds of the rope one under another or shifted by one knot (Fig.~\ref{f:cyl}, b). Connecting consecutive occurrences by line segments (``sticks''), we get three types of sticks: vertical, left-slanted, and right-slanted (Fig.~\ref{f:cyl},~b); these three types correspond respectively to the distances $d$, $d+1$, and $d-1$ between the occurrences. The sticks form $d$ broken lines, one per letter; we call these lines \emph{traces} of letters.

 \begin{figure}[htb]
     \centering
     \includegraphics[trim = 49 639 220 34, clip]{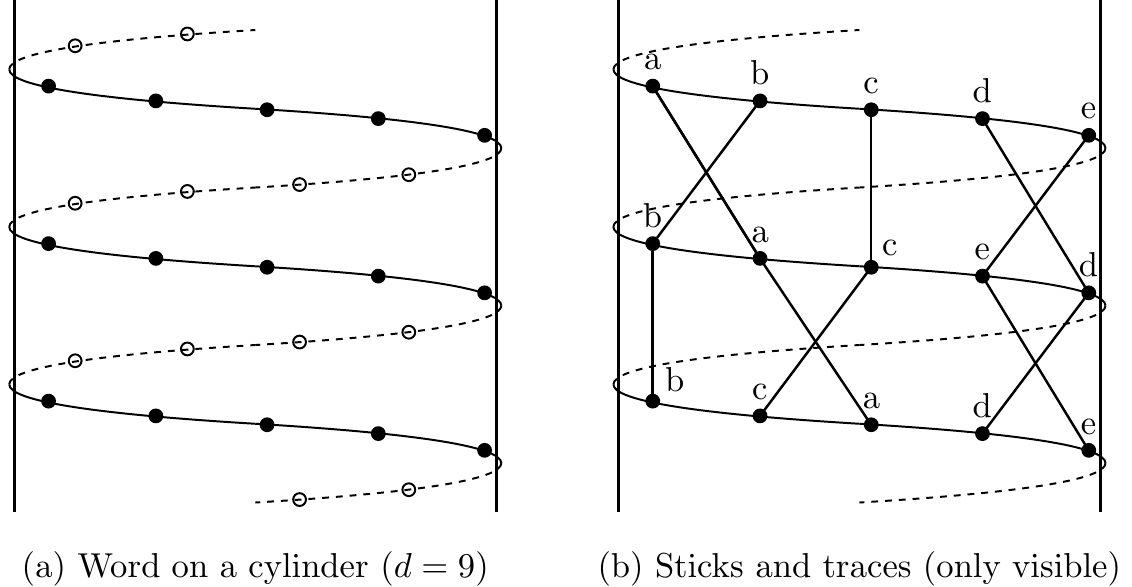}
     \caption{Cylindric representation of a Pansiot word. }
     \label{f:cyl}
 \end{figure}

 From (ii), we derive the following property:
 \begin{itemize}
     \item[(iv)] any two subsequent sticks in the cylindric representation of a Pansiot word (sequence) are distinct.
 \end{itemize}

From (iii) and Lemma~\ref{l:distance} we have:
 \begin{itemize}
     \item[$(\star)$] the trace of a letter in the cylindric representation of a balanced Pansiot sequence contains either no left-slanted sticks or no right-slanted sticks.
 \end{itemize}
(For example, if the cylindric representation of $\vv$ contains a fragment shown in Fig.~\ref{f:cyl}, b, then $\vv$ is not balanced.) 
We call a letter  \emph{frequent} (in $\vv$) if it has no consecutive occurrences at distance $d+1$ and \emph{rare} (in $\vv$) if it has no consecutive occurrences at distance $d-1$. The following key lemma shows that the distance $d$ is possible only for one of these two classes of letters.

 \begin{lemma} \label{l:traces}
In a balanced Pansiot sequence, either the traces of all frequent letters consist only of right-slanted sticks or the traces of all rare letters consist only of left-slanted sticks.
 \end{lemma}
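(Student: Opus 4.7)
I approach this by contradiction: assume simultaneously that a frequent $f$ has a vertical stick $v_i=v_{i+d}=f$ and that a rare $r$ has a vertical stick $v_j=v_{j+d}=r$. The plan is first to derive a rigid local ``adjacency pattern'' around any vertical stick, then to show that vertical sticks are isolated, and finally to combine the two patterns through a case analysis on the relative position of $r$'s stick with respect to $f$'s.

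The adjacency step goes as follows. For any vertical stick $v_p=v_{p+d}=a$, property~(i) forces $v_{p+1},\ldots,v_{p+d-1}$ to be a permutation of $\A\setminus\{a\}$. Then $v_{p+1}$ cannot be frequent: its next occurrence would have to be at $p+d$ (but $v_{p+d}=a\ne v_{p+1}$) or at $p+d+1$ (forbidden by (ii)); consequently $v_{p+1}$ is rare, and the same exclusion forces $v_{p+1}=v_{p+d+2}$, a left-slanted stick. A symmetric derivation, using the ``preceded by'' form of (ii) (which follows by applying (ii) to any putative equality $v_{p-1}=v_{p+d-1}$, yielding $v_p\ne v_{p+d}$), shows that $v_{p+d-1}$ is rare with $v_{p+d-1}=v_{p-2}$, whereas $v_{p+2}$ and $v_{p-1}$ are frequent, with right-slanted sticks $v_{p+2}=v_{p+d+1}$ and $v_{p-1}=v_{p+d-2}$. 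Iterating this on a hypothetical triple $v_p=v_{p+d}=v_{p+2d}=a$ classifies $v_{p+d+1}$ as both frequent and rare, so every vertical stick is isolated: the neighbouring occurrences of $a$ sit at distance $d-1$ if $a$ is frequent and $d+1$ if $a$ is rare.

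For the case analysis, translate so that $i=0$; then $r$ occurs at some $v_k$ with $k\in\{1,\ldots,d-1\}$ in $f$'s window. If $r$'s vertical stick is at $(k,k+d)$, the cases $k\in\{1,2,d-2,d-1\}$ fail at once, because one of $v_k,v_{k+d}$ has been fixed by the pattern of $f$'s stick to be $f$ or one of the two specific frequent letters $c_1,c_2$, whereas both must equal the rare $r$. For $k=4$ (respectively, $k=d-3$) the pattern of $r$'s stick forces $v_{k-2}$ (respectively, $v_{k+1}$) to be rare, but the pattern of $f$'s stick has already identified that position with $c_1$ (respectively, $c_2$), which is frequent. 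For the remaining middle values $k\in\{3,5,\ldots,d-4\}$, I iterate the adjacency pattern and the isolation result on the bordering rare letters of both sticks; combining all the forced classifications produces some letter in $\A\setminus\{f,r\}$ whose trace must simultaneously contain a right-slanted and a left-slanted stick, contradicting $(\star)$. If $r$'s vertical stick lies outside $[0,d]$, the same analysis applies with the roles of $f$ and $r$ swapped, since $f$ in turn must appear in $r$'s window.

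The main obstacle lies in the middle range of the case analysis, where the iterated forcing has to be tracked carefully so that the emerging contradiction with $(\star)$ is unavoidable. The hypothesis $d\ge 11$ enters exactly here: it guarantees that the window $\{1,\ldots,d-1\}$ is wide enough for the iteratively forced positions not to collapse onto previously determined ones, so the final clash between rare and frequent classifications of a third letter cannot be avoided.
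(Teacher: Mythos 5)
Your local analysis is sound and matches the first half of the paper's argument: around a vertical stick $v_i=v_{i+d}=a$ you correctly force the crossed pairs ($v_{i-1}=v_{i+d-2}$ and $v_{i+2}=v_{i+d+1}$ right-slanted, hence frequent; $v_{i+1}=v_{i+d+2}$ and $v_{i-2}=v_{i+d-1}$ left-slanted, hence rare), and you correctly rule out two vertical sticks sharing a knot. The gap is in the global step. Your case analysis presumes that the vertical stick of the rare letter $r$ sits at some position $k\in\{1,\dots,d-1\}$ inside the window of $f$'s vertical stick, and the fallback "swap the roles of $f$ and $r$" does not repair this: the two assumed vertical sticks can occur at positions arbitrarily far apart in $\vv$, in which case neither lies in the other's window and no local configuration contains both. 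The occurrence of $r$ that property (i) guarantees near $f$'s stick need not be a \emph{vertical} occurrence of $r$, so your adjacency pattern says nothing about it. On top of this, the "remaining middle values $k\in\{3,5,\dots,d-4\}$" are dispatched only by an unverified promise that iterating the adjacency pattern produces a clash with $(\star)$; as you note for $k=3$ yourself, the two local patterns can be perfectly consistent, and iterating the adjacency pattern requires further vertical sticks that you have not produced.

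The missing idea is propagation. In the paper, the frequent letters $c=v_{i-1}$ and $v_{i+2}$, being forced to have right-slanted sticks, are then forced (again by (iv) and the no-common-knot observation) to extend their traces by \emph{vertical} sticks at $(i-d-1,\,i-1)$ and $(i+d+1,\,i+2d+1)$. Induction then yields $(\dagger)$: a single vertical stick of a frequent letter forces vertical sticks of frequent letters at \emph{all} positions congruent to some $p$ modulo $d+1$; symmetrically, a vertical stick of a rare letter forces $(\ddagger)$, vertical sticks of rare letters at all positions congruent to some $q$ modulo $d-1$. These two bi-infinite arithmetic progressions necessarily interact somewhere — either they meet (a letter both frequent and rare, contradicting $(\star)$) or they contain adjacent positions (two consecutive vertical sticks, contradicting (ii)), depending on the parities of $p$ and $q$. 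This is what brings two arbitrarily distant vertical sticks into conflict, and it is the step your proof lacks. Finally, your closing remark that $d\ge 11$ is essential to the argument is off: the paper notes the proof works for all $d\ge 4$, the restriction to $d\ge 11$ being only because stronger bounds are already known for smaller alphabets.
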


\begin{proof}
Let us fix a balanced Pansiot sequence $\vv=v_0v_1v_2\cdots$  and assume that the trace of some frequent letter $a$ in $\vv$ contains a vertical stick (otherwise, there is nothing to prove). Consider a fragment of the cylindric representation of $\vv$ around a fixed vertical stick in the trace of $a$ (Fig.~\ref{f:sticks}, a--d). By (iv), this stick (the blue one) is surrounded by pairs of crossed slanted sticks (Fig.~\ref{f:sticks}, a). Note that two vertical sticks cannot have a common knot: this contradicts $(\star)$ (Fig.~\ref{f:sticks}, b). Since $a$ is frequent, two more crossed pairs should be added to the picture (Fig.~\ref{f:sticks}, c). Next, the traces of the letters $c$ and $d$ contain right-slanted sticks, so these letters are frequent. Then the trace of $c$ (resp., of $d$) extends up (resp., down) by a vertical stick (Fig.~\ref{f:sticks}, d). Thus we proved the following fact: if, for some $i$, $v_i=v_{i+d}$ is a frequent letter, then $v_{i-d-1}=v_{i-1}$ is a frequent letter and $v_{i+d+1}=v_{i+2d+1}$ is also a frequent letter (these three pairs of equal letters correspond to three vertical sticks in Fig.~\ref{f:sticks}, d). Thus 
\begin{itemize}
     \item[$(\dagger)$] there exists $p\in \{0, \dots, d\}$ such that all positions equal to $p$ modulo $d{+}1$ are occupied in $\vv$ by frequent letters and correspond to vertical sticks.
\end{itemize}

\begin{figure}[htb]
     \centering
     \includegraphics[trim = 43 730 255 29, clip]{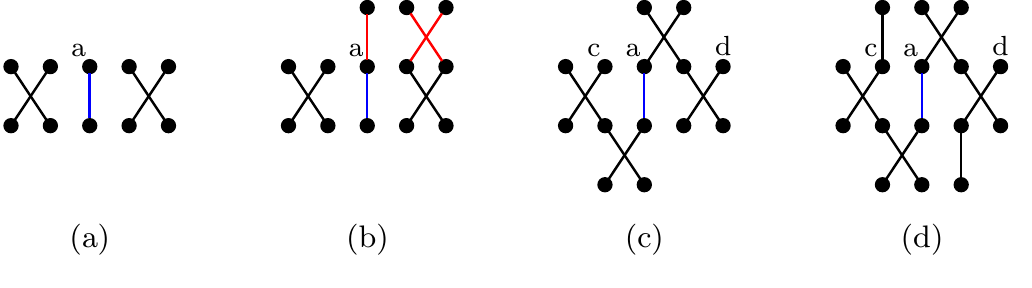}\\
     \includegraphics[trim = 43 730 330 26, clip]{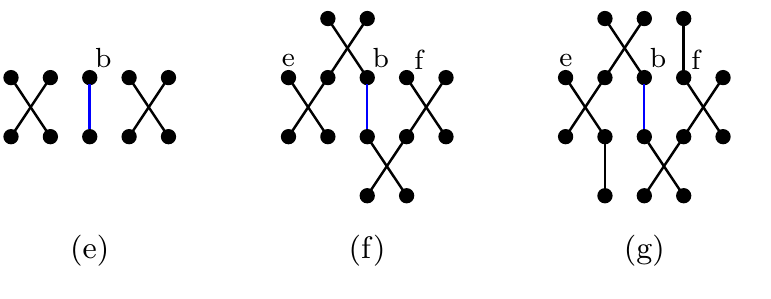}
     \caption{Lemma~\ref{l:traces}: mutual location of vertical sticks in the traces of frequent and rare letters. }
     \label{f:sticks}
\end{figure}

Now assume that the trace of some rare letter $b$ in $\vv$ contains a vertical stick (or there is nothing to prove). By the same argument as above, we reconstruct a fragment of the cylindric representation of $\vv$ (Fig.~\ref{f:sticks}, e-g). As a result, we get the following analog of $(\dagger)$: 
 \begin{itemize}
     \item[$(\ddagger)$] there exists $q\in \{0, \dots, d{-}2\}$ such that all positions equal to  $q$ modulo $d{-}1$ are occupied in $\vv$ by rare letters and correspond to vertical sticks.
 \end{itemize}
 It remains to note that $(\dagger)$ and $(\ddagger)$ cannot hold simultaneously. Indeed, if $p$ and $q$ have the same parity, then some letter should be both frequent and rare in $\vv$, which is impossible (see Fig.~\ref{f:sticks}, b); if $p$ and $q$ are of different parity, then the cylindric representation of $\vv$ contains two consecutive vertical sticks, which contradicts (ii). But as $(\dagger)$ and $(\ddagger)$ does not hold simultaneously, at least one of our assumptions about the existence of vertical sticks is false. The statement of the lemma is immediate from this.
 \end{proof}

\begin{proof}[Proof of Theorem~\ref{t:lbound}]
Let us take a $d$-ary aperiodic balanced sequence $\vv$ and assume $\mathrm{E}(\vv)<\frac{d-1}{d-2}$. Then $\vv$ is a Pansiot sequence.  By Lemma~\ref{l:traces}, either each frequent letter in $\vv$ has frequency $\frac{1}{d-1}$ or each rare letter in $\vv$ has frequency $\frac{1}{d+1}$. But these frequencies must be irrational by Corollary~\ref{c:irratfreq}. This contradiction proves our assumption false.


\end{proof}

If the bound of Theorem~\ref{t:lbound} is tight, then the balanced sequences of minimal critical exponent should satisfy the following property.

\begin{proposition} \label{p:maxfreq}
Let $\vv$ be a balanced sequence over $d\ge 11$ letters such that $\mathrm{E}(\vv)=\frac{d-1}{d-2}$. Then the maximum letter frequency $\rho$ in $\vv$ satisfies $\frac{1}{d-1}< \rho < \frac{1}{d-2}$.
\end{proposition}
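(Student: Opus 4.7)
My plan is to split the double inequality into the two one-sided bounds. For the upper bound $\rho<\frac{1}{d-2}$ I would observe that if any letter $a$ had two consecutive occurrences at distance $k\le d-3$, the factor they bracket would have length $k+1$ and period $k$, so exponent $\frac{k+1}{k}\ge\frac{d-2}{d-3}$, strictly exceeding $\frac{d-1}{d-2}$ and contradicting $\mathrm{E}(\vv)=\frac{d-1}{d-2}$. Hence every frequency is at most $\frac{1}{d-2}$, and Corollary~\ref{c:irratfreq} upgrades this to strict inequality.

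For the lower bound $\rho>\frac{1}{d-1}$ I would argue by contradiction. Assume $\rho\le\frac{1}{d-1}$, so every letter has minimum gap at least $d-1$. The main work is to re-verify the Pansiot properties (i)--(iii) from the proof of Theorem~\ref{t:lbound} at the boundary $\mathrm{E}(\vv)=\frac{d-1}{d-2}$. Property (i) is immediate. For (ii), if a forbidden factor $abXab$ exists with $X$ free of $a$, its minimum period is at most $|X|+2$, so the exponent bound forces $|X|\ge 2d-6$; if $X$ also avoids $b$, property (i) gives $|X|\le d-2$, contradicting $|X|\ge 2d-6$ for $d\ge 5$; if $X$ contains $b$, the $a$-free factor $bX$ of length at least $2d-5$ has every length-$(d-1)$ window equal to the whole $(d-1)$-letter sub-alphabet $\A\setminus\{a\}$, hence $bX$ has period $d-1$ and exponent at least $\frac{2d-5}{d-1}>\frac{d-1}{d-2}$ for $d\ge 6$. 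An analogous $(d-1)$-periodicity argument applied to an overly long $a$-free run rules out gaps larger than $d+1$, giving (iii).

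Once $\vv$ is established as a Pansiot sequence, every letter is frequent (gaps $\{d-1,d\}$) or rare (gaps $\{d,d+1\}$). A frequency-sum argument eliminates the degenerate cases of Lemma~\ref{l:traces}: if all $d$ letters were frequent each frequency would lie strictly in $(\frac{1}{d},\frac{1}{d-1})$, so $\sum_a\rho_a>1$; if all were rare then $\sum_a\rho_a<1$; both contradict $\sum_a\rho_a=1$. Thus both classes are non-empty, Lemma~\ref{l:traces} applies non-vacuously and forces either every frequent letter to have frequency exactly $\frac{1}{d-1}$ or every rare letter to have frequency exactly $\frac{1}{d+1}$; the rationality of these values contradicts Corollary~\ref{c:irratfreq}.

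The main obstacle I anticipate is the re-verification of (ii) at the boundary: the proof of Theorem~\ref{t:lbound} used the strict inequality $\mathrm{E}(\vv)<\frac{d-1}{d-2}$, so the arithmetic in each sub-case must be redone carefully at equality to confirm that the Pansiot structure survives and Lemma~\ref{l:traces} can legitimately be invoked.
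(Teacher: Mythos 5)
Your proposal is correct and follows essentially the same route as the paper: the upper bound comes from the minimal gap $d-2$ plus irrationality of frequencies, and the lower bound is obtained by showing that its failure would make $\vv$ a balanced Pansiot sequence and then reusing the contradiction from the proof of Theorem~\ref{t:lbound}. Your extra care in re-verifying property (ii) at the boundary $\mathrm{E}(\vv)=\frac{d-1}{d-2}$ and in ruling out the vacuous all-frequent/all-rare cases of Lemma~\ref{l:traces} via the frequency-sum argument is a welcome tightening of details the paper leaves implicit, but it does not change the approach.
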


\begin{proof}
Every factor of length $d-2$ in $\vv$ contains no repeating letters. Hence the minimal distance between two occurrences of a letter is $d-2$. Assume that every factor of length $d-1$ $\vv$ contains no repeating letters. Then $\vv$ is a Pansiot sequence: it satisfies both (i) and (ii) (the negation of (ii) implies the inequality \eqref{eq:dplus2}, contradicting the condition $\mathrm{E}(\vv)=\frac{d-1}{d-2}$). As was shown in the proof of Theorem~\ref{t:lbound}, there are no aperiodic balanced Pansiot sequences, so our assumption is false. Therefore, $\vv$ contains factors of length $d-1$, having the form $aXa$. By Lemma~\ref{l:distance}, the distances between consecutive $a$'s in $\vv$ are $d-2$ and $d-1$. As the frequency of $a$ is irrational, the inequalities $\frac{1}{d-1}< \rho_a(\vv) < \frac{1}{d-2}$ are strict. 
\end{proof}

\section{Balanced Sequences Reaching the Lower Bound}
\label{s:sequences}

In this section we show that the lower bound  $\frac{d-1}{d-2}$ is attained for balanced sequences over a $d$-ary alphabet with $d=2\delta$ and $\delta \geq 6$. For each alphabet, the considered sequence $\xx_{2\delta}$ is the colouring by $\yy=(\tt 1 2 \cdots \delta)^{\omega}$ and $\yy'=(\tt 1' 2' \cdots \delta')^{\omega}$ of a standard sequence with the slope $\theta$ that will be specified later. We treat the cases $\delta \geq 7$ and $\delta=6$ separately, but we begin with the statements that are common to both cases. By Theorem \ref{Prop_FormulaForCR}, to prove that $E(\xx_{2\delta})=\frac{d-1}{d-2}$ it is sufficient to show that every bispecial factor $w \in \mathcal{L}(\xx_{2\delta})$ and its shortest return word $v$ satisfy  $\frac{|w|}{|v|}\leq \frac1{2\delta-2}$.  

Let $\xx_{2\delta}=\barva(\uu,\yy,\yy')$. There are only two distinct frequencies of letters in $\xx_{2\delta}$, namely, $\frac{\rho_{\tt a}(\uu)}{\delta}$ and $\frac{\rho_{\tt b}(\uu)}{\delta}$. By Proposition~\ref{p:maxfreq}, $\frac{1}{2\delta-1}< \frac{\rho_{\tt b}(\uu)}{\delta} < \frac{1}{2\delta-2}$. This can be converted into the double inequality $1-\frac{2}{\delta} < \theta < 1-\frac{1}{\delta}$ for the slope of $\uu$ (recall that $\theta=\frac{\rho_{\tt a}(\uu)}{\rho_{\tt b}(\uu)}$). Given this restriction, we define the slope to have the form $\theta=[0,1,\lfloor \delta/2 \rfloor,\ldots]$. Such a slope guarantees that $\xx_{2\delta}$ contains no short factors with the exponent greater than $\frac{d-1}{d-2}$, as the following proposition shows.

\begin{proposition} \label{p:short}
Suppose that $\xx_{2\delta}=\barva(\uu,\yy,\yy')$, where $\yy=(\tt 1 2 \dots \delta)^{\omega}$, $\yy'=(\tt 1' 2' \cdots \delta')^{\omega}$, and the slope of the Sturmian sequence $\uu$ has the form $\theta=[0,1,\lfloor \delta/2 \rfloor,\ldots]$. Then
\begin{enumerate}
    \item the distance between occurrences of a letter in $\xx_{2\delta}$ is at least $2\delta-2$;
    \item the distance between occurrences of a length-2 factor, both letters of which are from $\yy'$, in $\xx_{2\delta}$, is greater than $4\delta-4$. 
\end{enumerate} 
\end{proposition}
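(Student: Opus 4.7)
I first verify the preliminary observation that $\theta=[0,1,m,a_3,\ldots]$ with $m=\lfloor\delta/2\rfloor$ satisfies $\theta\in(1-2/\delta,1-1/\delta)$. A short computation with the convergents $p_2/q_2=m/(m+1)$ and $p_3/q_3$ shows that $\theta\in(m/(m+1),(m+1)/(m+2))$, and elementary comparisons place this interval inside $(1-2/\delta,1-1/\delta)$ for all $\delta\ge 6$. For item~1, every letter $c$ of $\xx_{2\delta}$ has frequency $\rho_c\le\rho_{\mathtt{b}}(\uu)/\delta=1/(\delta(1+\theta))$ by Corollary~\ref{c:irratfreq}. The bound $\theta>1-2/\delta$ gives $\rho_c<1/(2\delta-2)$, hence $1/\rho_c>2\delta-2$. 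By Lemma~\ref{l:distance} together with the elementary fact that the two distance values of $c$ average with positive weights to $1/\rho_c$, the smaller distance equals $\lfloor 1/\rho_c\rfloor$, which is $\ge 2\delta-2$ because $1/\rho_c$ is irrational.

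For item~2, I pass to the $\mathtt{b}$-gap sequence $\mathbf{g}$ of $\uu$, defined by $\mathbf{g}_i=B_{i+1}-B_i$ where $B_i$ is the position of the $i$-th $\mathtt{b}$ in $\uu$. Since $\theta>2/3$, $\mathtt{bbb}\notin\LL(\uu)$ and hence $\mathbf{g}\in\{1,2\}^{\mathbb{N}}$. Writing $\uu=D(\ww)$ with $D\colon\mathtt{a}\mapsto\mathtt{ba}$, $\mathtt{b}\mapsto\mathtt{b}$ the first morphism of the directive sequence of $\uu$, one obtains $\mathbf{g}_i=|D(\ww_i)|$, so $\mathbf{g}$ is the coding of the standard sequence $\ww$ by $\mathtt{b}\mapsto 1$, $\mathtt{a}\mapsto 2$. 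In particular $\mathbf{g}$ is Sturmian, its minority letter $1$ has frequency $1-\theta$, and Lemma~\ref{l:distance} applied to $\mathbf{g}$ shows that consecutive $1$'s in $\mathbf{g}$ are separated by a gap in $\{m+1,m+2\}$ (because $1/(1-\theta)\in(m+1,m+2)$). Occurrences of a length-$2$ factor $uv$ of $\xx_{2\delta}$ with $u,v\in\yy'$ are in bijection with indices $\kappa$ satisfying $\mathbf{g}_\kappa=1$ and $\kappa\equiv i\pmod\delta$, where $i$ is determined by $u$. For two consecutive such occurrences at indices $\kappa_1<\kappa_2$, the difference $K=\kappa_2-\kappa_1$ is a positive multiple of $\delta$ realised as a sum of $L$ gaps in $\{m+1,m+2\}$, so $L(m+1)\le K\le L(m+2)$. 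When $m\ge 3$ the value $K=\delta$ is excluded (these two inequalities force $L\le 1$ and $L\ge 2$ simultaneously), so $K\ge 2\delta$. The distance between the two occurrences in $\xx_{2\delta}$ equals $\sum_{j=\kappa_1}^{\kappa_2-1}\mathbf{g}_j=2K-L\ge K(2m+1)/(m+1)\ge 2\delta(2m+1)/(m+1)$, and a short calculation gives $2\delta(2m+1)/(m+1)=4\delta-4+c/(m+1)$ with $c\in\{2,4\}$ (depending on the parity of $\delta$), so the distance strictly exceeds $4\delta-4$.

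The main obstacle is the structural step in item~2: identifying $\mathbf{g}$ with a coding of a Sturmian sequence, which requires unwinding the first morphism of the directive sequence of $\uu$ and checking that $|D(\cdot)|$ records exactly the $\mathtt{b}$-gap length. Once this is in place, the arithmetic bounds on $L$ and $K$ are immediate and the conclusion follows.
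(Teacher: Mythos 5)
Your proof is correct, but it reaches both bounds by a genuinely different route than the paper. The paper first extracts from the prefix $[0,1,\lfloor\delta/2\rfloor,\ldots]$ of the continued fraction the explicit block decomposition of $\uu$ into $(\mathtt{ba})^{\lfloor\delta/2\rfloor}\mathtt{b}$ and $(\mathtt{ba})^{\lfloor\delta/2\rfloor+1}\mathtt{b}$, and then proves both items by counting $\mathtt{a}$'s and $\mathtt{b}$'s inside at most three consecutive blocks. For item~1 you instead use only frequencies: Corollary~\ref{c:irratfreq} gives $\rho_c\le 1/(\delta(1+\theta))<1/(2\delta-2)$, and Lemma~\ref{l:distance} together with the average-gap identity forces the minimal gap to be $\lfloor 1/\rho_c\rfloor\ge 2\delta-2$; this is shorter than the paper's count and needs nothing about $\uu$ beyond $\theta>1-2/\delta$, which you correctly re-derive from the convergents $p_2/q_2$ and $p_3/q_3$. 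For item~2 you desubstitute by $D$, recognize the $\mathtt{b}$-gap sequence as a letter-to-letter recoding of a standard sequence, and apply Lemma~\ref{l:distance} again to place consecutive $1$-gaps (i.e.\ $\mathtt{bb}$-positions) at distance $m+1$ or $m+2$ in the $\mathtt{b}$-indexing; combined with divisibility of $K$ by $\delta$ this yields $K\ge 2\delta$, matching the paper's conclusion that $c\ge 2$, and your estimate $2K-L\ge 2\delta(2m+1)/(m+1)>4\delta-4$ agrees with the paper's exact extremal value $4\delta-3$. The underlying structural fact is the same (your $1$-gaps are exactly the paper's block junctions), but your derivation is self-contained and more systematic, trading the paper's hands-on block counting for a little more machinery; both arguments are sound.
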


\begin{proof}
The given prefix of the continued fraction expansion of $\theta$ allows us to conclude that $\uu$ is an infinite concatenation of the ``blocks'' $(\tt ba)^{\lfloor \delta/2 \rfloor}\tt b$ and $(\tt ba)^{\lfloor \delta/2 \rfloor+1}\tt b$ (cf. Example~\ref{ex:factors}).

Let us fix an arbitrary letter $a$ and consider a factor in $\vv$ of minimal length containing two $a$'s: $v=aXa$. Let $u=\pi(v)$; statement 1 is thus equivalent to $|u|\ge 2\delta-1$. The structure of the sequences $\yy$ and $\yy'$ implies that $u$ contains $\delta+1$ occurrences of the letter $\pi(a)$. Since $\uu$ has no factor $\tt aa$, $\pi(a)={\tt a}$ implies $|u|\ge 2\delta+1$. Now let $\pi(a)=\tt b$. Note that the number of $\tt b$'s in two consecutive blocks is at least $2\lfloor \delta/2 \rfloor+2\ge \delta +1$. Since $|u|_{\tt b}=\delta+1$, the factor $u$ intersects at most three blocks in $\uu$. In a factor of a block, the number of $\tt b$'s exceeds the number of $\tt a$'s by at most 1. Hence, $|u|_{\tt b}-|u|_{\tt a}\le 3$, yielding $|u|\ge 2\delta-1$. Statement 1 is proved.

Now let $v=abXab$ be a shortest factor in $\vv$ with the property $\pi(ab)=\tt bb$. Let $u=\pi(v)$. The structure of $\yy$ and $\yy'$ implies $|u|_{\tt b}=c\delta+2$ for some $c\in \mathbb{N}$. The location of the factors $\tt bb$ in $u$ implies that $u=\mathtt{b}Z\mathtt{b}$, where $Z$ is a concatenation of blocks. Statement 2 is equivalent to $|Z|> 4\delta-4$. We have $|Z|_{\tt b}=c\delta$, while each block contains either $\lfloor \delta/2 \rfloor +1$ or $\lfloor \delta/2 \rfloor +2$ $\tt b$'s. Since $\delta\ge 6$, the number of $\tt b$'s in one block is 
less than $\delta$ while the number of $\tt b$'s in two blocks is greater than $\delta$. Hence $c\ge 2$. If $c=2$, then $Z$ consists of exactly three blocks, implying $|Z|_{\tt a}=|Z|_{\tt b}-3=2\delta-3$ and thus $|Z|=4\delta-3$. If $c>2$, $|Z|>4\delta-4$ holds trivially. Statement 2 is proved. 
\end{proof}

Let us recall that the length  $|v|$ of a return word equals the difference between two consecutive occurrences of $w$ in $\xx_{2\delta}$. Furthermore, if an index $i$ is an  occurrence of  $w$ in $\xx_{2\delta}$, then $i$ is an occurrence of $\pi(w)$ in $\uu$. Hence,  $\pi(v)$  is a concatenation of return words to $\pi(w)$ in $\uu$. Thus if $r$ and $s$ are the return words to $\pi(w)$ in $\uu$, then 
\begin{equation}\label{kl}
   \text{there exist} \  k,\ell \in \mathbb{N}, k+\ell\geq 1\ \ \text{such that} \ \ |v| = |\pi(v)| = k|r| + \ell|s|\,. 
   \end{equation}
In the sequel, if $\pi(w)$ is bispecial, we always assume that $r$ is the prefix return word to $\pi(w)$ and $s$ is the non-prefix one.

Of course,  not all pairs $(k,\ell)$  correspond to a concatenation of $r$ and $s$ forming  $\pi(v)$. The possible combinations are given by factors of the derived sequence of $\uu$ to  $\pi(w)$. Formally, 
\begin{equation}\label{klParikh}
   \begin{pmatrix} \ell \\ k \end{pmatrix} \ \text{ is the Parikh vector of a factor in \ } \  {\dd}_{\uu}(\pi(w)).
\end{equation}  

The simple form of $\yy$ and  $\yy'$ implies some evident properties.

\begin{lemma}\label{mod6} Let $v$ be a return word in $\xx_{2\delta}$ to a non-empty factor $w$ such that $\pi(w)$ contains both $\tt a$ and $\tt b$.
\begin{enumerate}
\item Both $|\pi(v)|_{\tt a}$ and $|\pi(v)|_{\tt b}$ are divisible by  $\delta$.
\item If $w$ is a bispecial factor of $\xx_{2\delta}$, then $\pi(w)$ is a bispecial factor of $\uu$.  
 \end{enumerate}
\end{lemma}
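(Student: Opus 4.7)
The plan is to exploit that the colouring sequences $\yy$ and $\yy'$ both have period $\delta$, so the letter occupying any position of $\xx_{2\delta}$ is determined, modulo $\delta$, by the number of $\mathtt{a}$'s or $\mathtt{b}$'s in $\uu$ that precede it. Writing $n_{\mathtt{a}}(p)$ (resp.\ $n_{\mathtt{b}}(p)$) for the number of letters $\mathtt{a}$ (resp.\ $\mathtt{b}$) in $\uu$ at indices strictly less than $p$, the key observation is the following: for any two occurrences $p,q$ of the factor $w$ in $\xx_{2\delta}$, if $\pi(w)$ contains the letter $\mathtt{a}$ then $n_{\mathtt{a}}(p) \equiv n_{\mathtt{a}}(q) \pmod \delta$, and an analogous implication holds for $\mathtt{b}$. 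Indeed, pick any index $i$ with $\pi(w)_i = \mathtt{a}$ and let $j$ denote the number of $\mathtt{a}$'s in $\pi(w)$ at indices less than $i$; by the colouring construction, $w_i$ equals both $\yy_{n_{\mathtt{a}}(p)+j}$ and $\yy_{n_{\mathtt{a}}(q)+j}$, so the $\delta$-periodicity of $\yy$ gives the congruence.

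Statement 1 then follows by applying the observation to two consecutive occurrences $p<q$ of $w$, so that $q-p=|v|$. The hypothesis that $\pi(w)$ contains both $\mathtt{a}$ and $\mathtt{b}$ yields both congruences, and since $|\pi(v)|_{\mathtt{a}} = n_{\mathtt{a}}(q) - n_{\mathtt{a}}(p)$ and $|\pi(v)|_{\mathtt{b}} = n_{\mathtt{b}}(q) - n_{\mathtt{b}}(p)$, both counts are multiples of $\delta$.

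For statement 2, I would establish right-specialness of $\pi(w)$ in $\uu$, the left case being symmetric. Take distinct letters $a_1, a_2$ with $wa_1, wa_2 \in \mathcal{L}(\xx_{2\delta})$. If $\pi(a_1) \neq \pi(a_2)$, then $\pi(w)$ is right-extendable in $\uu$ by two different letters and we are done. Otherwise, assume $\pi(a_1) = \pi(a_2) = \mathtt{a}$ (the $\mathtt{b}$-case is symmetric), and pick occurrences $p_1, p_2$ of $w$ followed by $a_1, a_2$ respectively; then $a_k = \yy_{n_{\mathtt{a}}(p_k) + |\pi(w)|_{\mathtt{a}}}$ for $k=1,2$. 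The key observation (applicable because $\pi(w)$ contains $\mathtt{a}$) gives $n_{\mathtt{a}}(p_1) \equiv n_{\mathtt{a}}(p_2) \pmod \delta$, hence $a_1 = a_2$, a contradiction.

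The only point requiring care, and essentially the only place where the hypothesis that $\pi(w)$ contains both $\mathtt{a}$ and $\mathtt{b}$ is used, is the bookkeeping inside the key observation; once that is set up, both parts of the lemma follow with essentially identical arguments.
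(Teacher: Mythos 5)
Your proof is correct. The paper states this lemma without any proof, introducing it only as an ``evident'' consequence of the simple form of $\yy$ and $\yy'$; your mod-$\delta$ bookkeeping (each letter of $\yy=(\mathtt{1}\cdots\mathtt{\delta})^\omega$ occupies exactly one residue class of indices modulo $\delta$, so equal letters at two occurrences force congruent $\mathtt{a}$-counts, and likewise for $\yy'$ and $\mathtt{b}$-counts) is precisely the argument the authors are implicitly relying on, and it correctly yields both the divisibility in part~1 and the transfer of right- and left-specialness to $\pi(w)$ in part~2.
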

Item 2 of Lemma~\ref{mod6} enables us to exploit Proposition \ref{prop:returnWords} on bispecial factors and return words in Sturmian sequences. 

\begin{lemma}\label{kl_mod_d} 
Let $w$ be a bispecial factor of $\xx_{2\delta}$ such that $\pi(w)$ contains both ${\tt a}$  and $\tt b$ and let $\theta'$ be the slope of the derived sequence of $\dd_{\uu}(\pi(w))$. 
Then for every return word $v$ to $w$ in $\xx_{2\delta}$,  there exist $k$ and $\ell$ such that 
\begin{enumerate}
    \item $k, \ell \in \mathbb{N}, k+\ell\geq 1$,
    \item  $k =0 \mod \delta,\ \  \ell = 0\mod \delta$, 
    \item  $\theta'(k-1) - 1 < \ell < \theta'(k+1) +1$ 
\end{enumerate}
and $|v| = k|r| + \ell|s|$, where $r$ is the prefix and $s$ the non-prefix return word to $\pi(w)$ in $\uu$. 
\end{lemma}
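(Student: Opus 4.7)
My strategy is to establish the three assertions in turn, leaning on the results already assembled.

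Assertion 1 is essentially a restatement of (\ref{kl}). By item~2 of Lemma~\ref{mod6}, $\pi(w)$ is a bispecial factor of the Sturmian sequence $\uu$, and by Vuillon's theorem it has exactly two return words in $\uu$, namely the prefix return word $r$ and the non-prefix one $s$. Since $\pi(v)$ is a concatenation of return words to $\pi(w)$, I get $|v|=|\pi(v)|=k|r|+\ell|s|$ for some $k,\ell\in\N$ with $k+\ell\ge 1$.

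For assertion 2, I plan to combine item~1 of Lemma~\ref{mod6} with the Parikh vectors supplied by Proposition~\ref{prop:returnWords}. Divisibility by $\delta$ of $|\pi(v)|_{\tt a}$ and $|\pi(v)|_{\tt b}$ translates into the congruences
\begin{align*}
k\,p_N + \ell(m p_N + p_{N-1}) &\equiv 0 \pmod\delta,\\
k\,q_N + \ell(m q_N + q_{N-1}) &\equiv 0 \pmod\delta.
\end{align*}
Multiplying the first by $q_N$ and the second by $p_N$ and subtracting annihilates the $k$-terms, leaving $\ell\,(p_N q_{N-1}-p_{N-1}q_N)\equiv 0\pmod\delta$. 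The unimodular identity $p_N q_{N-1}-p_{N-1}q_N=(-1)^{N+1}$ then forces $\delta\mid\ell$. Feeding this back shows that both $k p_N$ and $k q_N$ are divisible by $\delta$; since $\gcd(p_N,q_N)=1$, a Bezout combination of these two divisibilities yields $\delta\mid k$.

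For assertion 3, I invoke the second part of Vuillon's result, which guarantees that $\dd_{\uu}(\pi(w))$ is itself Sturmian, with slope $\theta'$ read off from Proposition~\ref{prop:returnWords}(2); note that the continued fraction form of $\theta'$ automatically keeps it below $1$, so Lemma~\ref{lem_kl} is applicable. Equation~(\ref{klParikh}) asserts that $\binom{\ell}{k}$ is the Parikh vector of a factor of this derived Sturmian sequence, and applying Lemma~\ref{lem_kl} to it delivers exactly the inequality $\theta'(k-1)-1<\ell<\theta'(k+1)+1$.

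The only delicate step is assertion~2, where I must carefully decouple the two modular equations using the determinantal identity and verify the coprimality argument; assertions 1 and 3 are essentially direct bookkeeping on top of quoted results, provided I keep the Parikh-vector labelling coherent between Proposition~\ref{prop:returnWords} and Lemma~\ref{lem_kl}.
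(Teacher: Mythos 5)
Your proposal is correct and follows essentially the same route as the paper: Item 1 is \eqref{kl}, Item 3 is the combination of \eqref{klParikh} with Lemma~\ref{lem_kl}, and Item 2 rests on the unimodularity of the matrix of Parikh vectors of $r$ and $s$. The paper phrases the last step as ``$\det A=\pm1$, hence $A^{-1}\in\mathbb{Z}^{2\times2}$'' while you unpack the same fact into an explicit elimination plus a Bezout argument using $\gcd(p_N,q_N)=1$; this is only a cosmetic difference.
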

\begin{proof} 
We need  to show that $k, \ell$ in Equation \eqref{kl} have also the properties described in Items 2 and 3. 
Let us denote by  $A$  the  matrix from $ \mathbb{N}^{2\times 2}$ such that $ \vec{V}(s)$ and $ \vec{V}(r)$ are the first and the second column of $A$, respectively. Then $\vec{V}(\pi(v))= A\begin{pmatrix} \ell \\ k \end{pmatrix}$. By Item 1 of Lemma \ref{mod6}, $A\begin{pmatrix} \ell \\ k \end{pmatrix} =  \delta\begin{pmatrix} L \\ K \end{pmatrix}$ for some integers $L,K$. By   Proposition \ref{prop:returnWords}, $\det A=\pm 1$, and thus the inverse matrix $A^{-1}$ belongs to  $\mathbb{Z}^{2\times 2}$. Hence $\begin{pmatrix} \ell \\ k \end{pmatrix} =  \delta A^{-1}\begin{pmatrix} L \\ K\end{pmatrix}$ and Item 2 follows. 

Item 3 is a direct consequence of \eqref{klParikh} and  Lemma \ref{lem_kl}. 
\end{proof}

\subsection{Balanced sequence over $d$ letters with $d\geq 14$, $d$ even}

\begin{theorem} \label{t:d14}
Let $\uu$ be the standard sequence with the slope $\theta=[0,1,\lfloor \delta/2 \rfloor,\overline{1}]$, $\delta\ge 7$ be an integer, $\yy=(12\cdots \delta)^\omega$, $\yy'=(1'2'\cdots \delta')^\omega$. 
Then the balanced sequence $\xx_{2\delta}=\barva(\uu, \yy, \yy')$ over the alphabet of $2\delta$ letters has the critical exponent $\frac{2\delta-1}{2\delta-2}$.
\end{theorem}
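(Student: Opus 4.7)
The plan is to combine Theorem~\ref{t:lbound} (which already gives $\mathrm{E}(\xx_{2\delta})\ge \frac{2\delta-1}{2\delta-2}$) with Theorem~\ref{Prop_FormulaForCR}, reducing the reverse inequality to the claim that every bispecial factor $w\in\LL(\xx_{2\delta})$ and its shortest return word $v$ satisfy $|v|\ge (2\delta-2)|w|$.

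I would split the bispecial factors of $\xx_{2\delta}$ according to the shape of $\pi(w)$. When $w=\varepsilon$ or $|w|=1$, the bound follows directly from Proposition~\ref{p:short}(1)---indeed it is tight when $|w|=1$, which pins down the value $\frac{2\delta-1}{2\delta-2}$. When $\pi(w)$ contains both $\mathtt{a}$ and $\mathtt{b}$, Lemma~\ref{mod6}(2) promotes $\pi(w)$ to a bispecial factor of $\uu$, whereupon Proposition~\ref{prop:returnWords} supplies a unique $(N,m)$ with $0\le m<a_{N+1}$ and Parikh vectors of the prefix return word $r$ and the non-prefix return word $s$ in terms of the convergents $(p_N,q_N)$, yielding $|w|=|r|+|s|-2$. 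By Lemma~\ref{kl_mod_d}, every return word $v$ to $w$ in $\xx_{2\delta}$ has length $|v|=k|r|+\ell|s|$ for some $(k,\ell)\in(\delta\N)^2\setminus\{(0,0)\}$ satisfying the slope condition $(k-1)\theta'-1<\ell<(k+1)\theta'+1$ with $\theta'=[0,a_{N+1}-m,a_{N+2},\ldots]$. Hence it suffices to bound $\min\{k|r|+\ell|s|:(k,\ell)\text{ admissible}\}$ from below by $(2\delta-2)(|r|+|s|-2)$.

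For the slope $\theta=[0,1,\lfloor\delta/2\rfloor,\overline{1}]$, the convergents are computed from \eqref{eq:convergents}: the cases $N=0$ and $N=1$ yield only finitely many bispecial indices (with $m\in\{0,\ldots,\lfloor\delta/2\rfloor-1\}$ when $N=1$), each checked by direct computation of $\theta'$ and of the minimal admissible $(k,\ell)$. For $N\ge 2$ the continued fraction is purely Fibonacci, forcing $m=0$ and $\theta'=\varphi$, while $|r|$ and $|s|$ obey a Fibonacci-type recurrence. Writing $K=k/\delta$ and $L=\ell/\delta$, the slope condition becomes $|K\varphi-L|<(1+\varphi)/\delta$, which picks out the first convergent $L/K$ of $\varphi$ whose denominator exceeds $\delta/(1+\varphi)=\delta\varphi$; this is a pair of consecutive Fibonacci numbers.

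The main technical obstacle will be establishing $\delta(K|r|+L|s|)\ge (2\delta-2)(|r|+|s|-2)$ uniformly across this infinite Fibonacci family. I expect the asymptotics $|s|/|r|\to\varphi$ and $L/K\to\varphi$, together with the identity $p_Nq_{N-1}-p_{N-1}q_N=\pm 1$, to yield a limit ratio $(K|r|+L|s|)/(|r|+|s|)$ of order $F_n$, which is itself of order $\delta$ by the choice of $(K,L)$; so the ratio comfortably exceeds $(2\delta-2)/\delta$ once $\delta\ge 7$. The delicate cases are the ``boundary'' indices where the minimizing Fibonacci pair has just jumped to a new level---where the ratio is smallest---and the transitional indices at $N=1,2$ where the continued-fraction prefix still plays a role; both are finite for each $\delta$ and will be handled by direct estimates using Proposition~\ref{prop:returnWords}.
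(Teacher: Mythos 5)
Your overall architecture coincides with the paper's: reduce the upper bound to $|v|\ge(2\delta-2)|w|$ via Theorem~\ref{Prop_FormulaForCR}, promote $\pi(w)$ to a bispecial factor of $\uu$ by Lemma~\ref{mod6}, classify it by the pair $(N,m)$ of Proposition~\ref{prop:returnWords}, and constrain the admissible $(k,\ell)$ through Lemma~\ref{kl_mod_d}. Two points, however, are genuine gaps rather than routine details. The first is that your case split ($w=\varepsilon$ or $|w|=1$ versus $\pi(w)$ containing both letters) is not exhaustive: since $a_1=1$, the sequence $\uu$ contains the factor $\mathtt{bb}$, so $\xx_{2\delta}$ has length-$2$ factors $w$ with $\pi(w)=\mathtt{bb}$, which fall into neither of your cases; this is precisely what item~2 of Proposition~\ref{p:short} (distance $>4\delta-4$) is for, and you invoke only item~1.

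The more serious gap is your treatment of the $N=1$ bispecial factors. The pairs $(1,m)$ with $1\le m\le\lfloor\delta/2\rfloor-1$ are not ``finitely many indices checked by direct computation'': their number grows with $\delta$, so a uniform argument in $m$ and $\delta$ is needed, and this family is where the actual danger sits. Here $|r|=2$, $|s|=2m+1$, $\theta'=[0,\lfloor\delta/2\rfloor-m,\overline{1}]$, and unlike the $(N,0)$, $N\ge2$ family (where every admissible pair has $k'=k/\delta\ge 3$ and $\ell'=\ell/\delta\ge 2$, making the bound immediate), admissible pairs with $\ell'=1$ can occur; the only candidate is then $k'=\lfloor\delta/2\rfloor-m+1$, giving the ratio $\frac{2m+1}{\delta(2\lfloor\delta/2\rfloor+3)}$, which is below $\frac{1}{2\delta-2}$ only for small $m$. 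For instance, for $\delta=8$, $m=3$ the pair $(k',\ell')=(2,1)$ would yield $\frac{7}{88}>\frac{1}{14}$ and the theorem would be false --- so one must prove such pairs violate the admissibility inequality of Lemma~\ref{kl_mod_d}. That exclusion is the paper's bound \eqref{eq:valueM} on $m$ together with the estimate \eqref{eq:last}, and it is the technical heart of the proof; your plan omits it. (A smaller point: your rule ``first convergent of $\varphi$ with denominator exceeding $\delta\varphi$'' does not identify the minimal admissible pair --- for $\delta=7$ it predicts $(5,3)$ while $(3,2)$ is already admissible --- but since every admissible pair in the $N\ge2$ family still has $k'\ge3$, $\ell'\ge2$, your conclusion for that family survives.)
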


The next proposition is crucial in proving Theorem~\ref{t:d14}.

\begin{proposition}\label{long_2d}
 Let $w$ be a bispecial factor of $\xx_{2\delta}$ such that $\pi(w)$ contains ${\tt a}$  and $\tt b$. Then  
$\frac{|w|}{|v|} < \frac1{2\delta-2}$ for every 
 return word $v$ to $w$ in $\xx_{2\delta}$. 
 \end{proposition}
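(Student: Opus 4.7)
The plan is to reduce the bound $|w|/|v| < 1/(2\delta-2)$ to a constrained Diophantine problem via the structural lemmas of Section~\ref{sec:sturmian} and then perform a short case analysis on the shape of $\pi(w)$ in $\uu$.

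By Lemma~\ref{mod6}(2), $\pi(w)$ is a bispecial factor of $\uu$, so Proposition~\ref{prop:returnWords} gives unique integers $N, m$ with $0\le m<a_{N+1}$ parameterising $\pi(w)$. With $\theta = [0, 1, \lfloor\delta/2\rfloor, \overline{1}]$, the assumption that $\pi(w)$ contains both letters rules out $(N,m)\in\{(0,0),(1,0)\}$, leaving two families: \emph{Case A}, where $N=1$ and $1\le m\le\lfloor\delta/2\rfloor-1$, with $|r|=2$, $|s|=|w|=2m+1$, and $\theta'=[0,\lfloor\delta/2\rfloor-m,\overline{1}]$; and \emph{Case B}, where $N\ge 2$ and $m=0$, with $|r|=L_N$, $|s|=L_{N-1}$, $|w|=L_{N+1}-2$, and $\theta'=\varphi$. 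Here $L_N=p_N+q_N$ satisfies the Fibonacci-like recurrence $L_{N+1}=L_N+L_{N-1}$ for $N\ge 2$. By Lemma~\ref{kl_mod_d}, any return word $v$ to $w$ satisfies $|v|=k|r|+\ell|s|$ with $k=\delta\alpha$, $\ell=\delta\beta$, $\alpha+\beta\ge 1$, and $|\beta-\alpha\theta'|<(1+\theta')/\delta$.

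For Case B, I would show that no admissible pair has $\alpha\le 2$ or $\beta\le 1$ when $\delta\ge 7$. A finite enumeration of small candidates yields $|\beta-\alpha\varphi|\ge|1-2\varphi|=\sqrt{5}-2\approx 0.236$ for each such pair (the minimum being attained at $(\alpha,\beta)=(2,1)$), and one verifies $\sqrt{5}-2>(1+\varphi)/7$, so the admissibility constraint fails for every $\delta\ge 7$. Consequently every admissible pair satisfies $\alpha\ge 3$ and $\beta\ge 2$, whence $|v|\ge \delta(3L_N+2L_{N-1})=\delta(L_N+2L_{N+1})$, and a direct expansion gives
\[
\delta(L_N+2L_{N+1})-(2\delta-2)(L_{N+1}-2)=(\delta+2)L_N+2L_{N-1}+4(\delta-1)>0,
\]
hence $|v|>(2\delta-2)|w|$.

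For Case A, I would split on $\beta$. If $\beta=0$, the interval constraint forces $\delta\alpha<1+1/\theta'=1+\lfloor\delta/2\rfloor-m+\varphi\le\lfloor\delta/2\rfloor+\varphi<\delta$, so $\alpha=0$, contradicting $\alpha+\beta\ge 1$. If $\beta\ge 2$, then $|v|\ge(2m+1)\ell\ge 2\delta(2m+1)=2\delta|w|$, immediately giving $|v|/|w|\ge 2\delta$. The critical subcase is $\beta=1$: the constraint pins $\alpha$ in an interval of half-width $(1+\theta')/(\delta\theta')$ centred at $1/\theta'=\lfloor\delta/2\rfloor-m+\varphi$, and a short computation shows that the only integer candidate is $\alpha=\lfloor\delta/2\rfloor-m+1$, admissible precisely when $m<\lfloor\delta/2\rfloor-\varphi^2\delta+1+\varphi$. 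In that range, $|v|=2\delta(\lfloor\delta/2\rfloor-m+1)+\delta(2m+1)=\delta(2\lfloor\delta/2\rfloor+3)$, which is independent of $m$, and a direct check for $\delta\ge 7$ confirms $\delta(2\lfloor\delta/2\rfloor+3)>(2\delta-2)(2m+1)$ at the largest admissible $m$. The main technical obstacle is this $\beta=1$ subcase: one must delineate the admissibility range for $m$, prove uniqueness of the integer $\alpha$ in the allowed interval, and verify the target inequality at the boundary. The pleasant fact that $|v|$ does not depend on $m$ in this subcase makes the final bound reduce to a single-variable estimate, valid for all $\delta\ge 7$.
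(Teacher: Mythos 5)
Your proposal is correct and follows essentially the same route as the paper's proof: reduce to the pairs $(N,0)$ with $N\ge 2$ and $(1,m)$ with $1\le m\le\lfloor\delta/2\rfloor-1$ via Lemma~\ref{mod6} and Proposition~\ref{prop:returnWords}, use Lemma~\ref{kl_mod_d} to force $k'\ge 3,\ \ell'\ge 2$ in the first family, and isolate the critical subcase $\ell'=1$, $k'=\lfloor\delta/2\rfloor-m+1$ with the same admissibility bound on $m$ in the second. The only differences are cosmetic (your uniform bound $|\beta-\alpha\varphi|\ge\sqrt5-2>(1+\varphi)/7$, and the shortcut $|v|\ge 2\delta|s|=2\delta|w|$ when $\beta\ge 2$), and your deferred ``direct check'' at the end is exactly the case split on $\delta$ that the paper carries out explicitly.
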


\begin{proof} 
By  Lemma \ref{mod6}, $z:=\pi(w)$ is a bispecial factor in $\uu$. As $z$ contains both $\tt a$ and $\tt b$, it can be any  bispecial factor of $\uu$ except $\tt b$.  We use Proposition~\ref{prop:returnWords} to find the pair $(N,m)$  with $m< a_{N+1}$ corresponding to $z$ and the slope $\theta'$ of the derived sequence ${\dd}_{\uu}(z)$. By the definition of convergents, the pair $(1,0)$ corresponds to the factor $\tt b$. So we need to analyse all pairs $(N, 0)$ for $N\geq 2$ and all pairs $(1,m)$ for $m\in \{1,\dots, \lfloor \delta/2 \rfloor-1\}$. Recall that $\varphi=\frac{\sqrt{5}-1}{2}$ is the slope of the Fibonacci sequence.

\begin{itemize}
    \item The pair  $(N, 0)$, $N\geq 2$: \quad  By Proposition \ref{prop:returnWords}, $\theta' = [0,\overline{1}] =\varphi$. 
    
    Let $k,\ell$ satisfy Items 1--3 from Lemma~\ref{kl_mod_d}. Then $k=\delta k'$ and $\ell = \delta \ell'$ for some $\ell', k' \in \mathbb{N}, k'+\ell'\geq 1$. Dividing all parts of Item~3 by $\delta$, we get
    \begin{equation}\label{eq:kl_prime}
    \varphi k' - \frac{\varphi+1}{\delta} < \ell' < \varphi k' + \frac{\varphi+1}{\delta}\,.         
    \end{equation}
    As $\delta\geq 7$, the left inequality fails for $\ell'=0$, $k'>0$ and for $\ell'=1$, $k'>1$; the right inequality does not hold for $\ell'=k'=1$. Hence $\ell'\geq 2$ and $k'\geq 3$.
    Then $|v|= \delta k'|r| + \delta \ell'|s|\geq 3\delta |r| + 2\delta |s|$.   By Proposition~\ref{prop:returnWords},  $|w| = |\pi(w)| = |r| + |s|-2$.  Thus 
    \begin{equation}\label{eq:w_to_v}
    \frac{|w|}{|v|}\leq  \frac{|r|+|s| - 2}{3\delta |r| +2\delta |s|}  < \frac1{2\delta } < \frac1{2\delta -2}\,.
    \end{equation}

    \item The pair  $(1, m)$ with $m\in \{1,\dots, \lfloor \delta/2 \rfloor-1\}$: \quad Using \eqref{eq:convergents} and Proposition~\ref{prop:returnWords}, we get  $|r|= p_1+q_1=2$, 
    $|s| = m(p_1+q_1)+p_0+q_0 = 2m+1$ and  $\theta' = [0,\lfloor \delta/2 \rfloor-m,\overline{1}]=\frac{1}{\lfloor \delta/2 \rfloor-m+\varphi}\le \frac{1}{1+\varphi}=\varphi$.  
 
    Let $k,\ell$ satisfy Items 1--3 from Lemma~\ref{kl_mod_d}. Then $k=\delta k'$ and $\ell = \delta \ell'$ for some $\ell', k' \in \mathbb{N}, k'+\ell'\geq 1$. Similar to \eqref{eq:kl_prime}, we get
    \begin{equation}\label{eq:kl_prime2}
    \theta' k' - \frac{\theta'+1}{\delta} < \ell' < \theta' k' + \frac{\theta'+1}{\delta}\,. 
    \end{equation}
    As $\theta'>\frac{2}{\delta}$, the lefthand part is positive for $k'\ne 0$, implying $\ell'>0$. As $\theta'\le \varphi$, the right inequality holds only if $\ell'<k'$. Thus if $\ell'\ge 2$, then we bound the ratio $\frac{|w|}{|v|}$ as in \eqref{eq:w_to_v}. It remains to study the case $\ell'=1$. Substituting the values of $\ell'$ and $\theta'$ into \eqref{eq:kl_prime2}, we get the following double inequality for $k'$:
    \begin{equation}\label{eq:k_prime}
    \Big\lfloor \frac{\delta}{2} \Big\rfloor - m + \varphi-\frac{\lfloor \frac{\delta}{2} \rfloor-m+1+\varphi}{\delta} < k' < \Big\lfloor \frac{\delta}{2} \Big\rfloor - m + \varphi+\frac{\lfloor \frac{\delta}{2} \rfloor-m+1+\varphi}{\delta}\,.
    \end{equation}
    Since $\delta\ge 7$ and $1\le m< \lfloor \delta/2 \rfloor$, we have $\varphi> \frac{\lfloor \delta/2 \rfloor-m+1+\varphi}{\delta}$. Hence the only integer solution of \eqref{eq:k_prime} is $k'=\lfloor \delta/2 \rfloor-m+1$. The fact that it is a solution also restricts $m$ through the inequality $\varphi+\frac{\lfloor \delta/2 \rfloor-m+1+\varphi}{\delta}>1$, which transforms to 
    \begin{equation}\label{eq:valueM}   m<\varphi(\delta+1)+\lfloor \delta/2 \rfloor+1-\delta\,.
    \end{equation}    





    Estimating the ratio $|w|/|v|$, we get
    \begin{equation}\label{eq:w_to_v2}
    \frac{|w|}{|v|}\leq  \frac{|r|+|s| - 2}{k'\delta |r| + \delta |s|}  = \frac{2+(2m+1)-2}{(\lfloor \delta/2 \rfloor-m+1)\cdot\delta\cdot 2 + \delta\cdot(2m+1)} = \frac{2m+1}{\delta(2\lfloor \delta/2 \rfloor+3)}\,.
    \end{equation}

To finish the proof, it remains  to verify that  
\begin{equation} \label{eq:last}
\frac{2m+1}{\delta(2\lfloor \delta/2 \rfloor+3)} <\frac{1}{2\delta -2}\,.
\end{equation}
From \eqref{eq:valueM} we have $m=1$ for $\delta=7$ and $m\le 2$ for $\delta \in\{8,9,10\}$. So in all these cases \eqref{eq:last} trivially holds. If $\delta\geq 11$, the right hand side of \eqref{eq:valueM} satisfies $$
\varphi(\delta+1)+\lfloor \delta/2 \rfloor+1-\delta \leq  (\varphi - \tfrac12)\delta +1 + \varphi<\tfrac14 \delta + \tfrac14\,, 
$$ 
yielding 
$$ 
\frac{2m+1}{\delta(2\lfloor \delta/2 \rfloor+3)} < \frac{2\frac{\delta+1}{4} +1}{\delta(2\lfloor \delta/2 \rfloor+3)} \leq \frac{\delta+3}{2\delta(\delta+2)}< \frac{1}{2\delta -2}\,.
$$
\end{itemize}
\end{proof}

\begin{proof}[Proof of Theorem~\ref{t:d14}]
By Theorem~\ref{Prop_FormulaForCR}, it is sufficient to show that $\frac{|w|}{|v|}\le \frac{1}{2\delta-2}$ for every bispecial factor $w$ of $\xx_{2\delta}$ and every return word $v$ to $w$ in $\xx_{2\delta}$. If $\pi(w)$ contains both $\tt a$ and $\tt b$, the required inequality follows from Proposition~\ref{long_2d}. If $\pi(w)$ contains only one of these letters, the inequality follows from Proposition~\ref{p:short}. 
\end{proof}

\begin{remark} 
The inequalities in Proposition~\ref{long_2d} and in Item 2 of  Proposition~\ref{p:short} are strict, so the only type of factor of exponent $\frac{2\delta-1}{2\delta-2}$ in $\xx_{2\delta}$ is the repeat of a single letter at distance $2\delta-2$. As Proposition~\ref{p:maxfreq} shows, such repeats are unavoidable in balanced $d$-ary sequences with the critical exponent $\frac{d-1}{d-2}$.
\end{remark}

\begin{remark} 
The only step in the proof of Proposition~\ref{long_2d} that fails for $\delta=6$ is the derivation from inequality \eqref{eq:kl_prime}: the case $\ell'=1$, $k'=2$ becomes possible. Indeed, for $\delta = 6$, the slope  is $\theta = [0,1,3,\overline{1}]$.   By Proposition  \ref{prop:returnWords},  the Parikh vectors of the  return words $r$  and $s$ to the bispecial factor  corresponding to the pair $(7,0)$ are     $\vec{V}(r) = \begin{pmatrix} p_7 \\ q_7 \end{pmatrix}  = \begin{pmatrix} 29 \\ 37 \end{pmatrix} $   and $\vec{V}(s) = \begin{pmatrix} p_6 \\ q_6 \end{pmatrix}  = \begin{pmatrix} 18 \\ 23 \end{pmatrix} $.  
Then  
$$
\frac{|w|}{|v|} = \frac{|r|+|s| - 2}{k'\delta|r| + \ell'\delta |s|} = \frac{66+41 -2 }{2\cdot 6\cdot 66 + 1\cdot6\cdot 41} = \frac{105}{1038} > \frac{1}{10}\,. 
$$
In fact, $1038$ is the minimal period of a factor with the exponent $> \frac{11}{10}$.    
\end{remark}

\subsection{Balanced sequence over 12 letters}

\begin{theorem} \label{t:d12}
Let $\uu$ be the standard sequence with the slope $\theta=[0,1,3,\overline{2}]$,  $\yy=(123456)^\omega$, $\yy'=(1'2'3'4'5'6')^\omega$. Then the critical exponent of the balanced sequence $\xx_{12}=\barva(\uu, \yy, \yy')$ is  $\frac{11}{10}$.
\end{theorem}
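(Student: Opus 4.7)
The plan is to imitate the proof of Theorem~\ref{t:d14} specialised to $\delta = 6$: use Theorem~\ref{Prop_FormulaForCR} to reduce the claim to showing that $|w|/|v| \le \tfrac{1}{10}$ for every bispecial factor $w$ of $\xx_{12}$ and its shortest return word $v$. Combined with the lower bound of Theorem~\ref{t:lbound}, this gives $E(\xx_{12}) = \tfrac{11}{10}$. For bispecial $w$ whose projection $\pi(w)$ contains only one type of letter, Proposition~\ref{p:short} delivers the bound directly (its hypothesis $\delta \ge 6$ was set precisely for this case). The heart of the proof concerns bispecial $w$ for which $\pi(w)$ contains both $\tt a$ and $\tt b$.

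For such $w$, Lemma~\ref{mod6} guarantees that $\pi(w)$ is bispecial in $\uu$, so Proposition~\ref{prop:returnWords} associates to it a unique pair $(N, m)$. Since $\theta = [0,1,3,\overline{2}]$ has $a_1 = 1$, $a_2 = 3$ and $a_n = 2$ for $n \ge 3$, the admissible pairs (excluding $(1,0)$, which yields $\pi(w) = \tt b$) fall into four families: $(1, 1)$, $(1, 2)$, $(N, 0)$ for $N \ge 2$, and $(N, 1)$ for $N \ge 2$. For each family I would compute the derived-sequence slope $\theta'$ via Proposition~\ref{prop:returnWords}(2) and then enumerate the admissible $(k', \ell')$ with $k = 6k'$, $\ell = 6\ell'$ using Lemma~\ref{kl_mod_d}. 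The target inequality $|w|/|v| \le \tfrac{1}{10}$ rewrites as $(6k' - 10)|r| + (6\ell' - 10)|s| \ge -20$, which holds automatically whenever $k' \ge 2$ and $\ell' \ge 2$.

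The delicate case is $(N, 0)$ for $N \ge 2$: here $\theta' = [0, \overline{2}] = \sqrt{2} - 1$, and the smallest nontrivial pair permitted by Lemma~\ref{kl_mod_d} is $(k', \ell') = (2, 1)$. The inequality then reduces to $|r|_N \ge 2|s|_N - 10$, where $|r|_N = p_N + q_N$ and $|s|_N = p_{N-1} + q_{N-1}$. This is precisely the point where the choice $\overline{2}$ rather than $\overline{1}$ matters: the recurrence~\eqref{eq:convergents} with $a_n = 2$ has dominant growth rate $\sqrt{2} + 1$, so $|r|_N / |s|_N \to \sqrt{2}+1 > 2$ and in fact $|r|_N \ge 2|s|_N$ already for every $N \ge 3$; the one remaining case $N = 2$ is settled by the explicit values $|r|_2 = 7$, $|s|_2 = 2$. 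The other admissible $(k', \ell')$ for this family (namely $(5, 2)$, $(7, 3)$, $\ldots$) satisfy the required inequality trivially.

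The remaining three families are routine. In $(1, 1)$ and $(1, 2)$ one has $|r| = 2$ and $|s| \in \{3, 5\}$, with $\theta'$ equal to $\sqrt{2} - 1$ and $\tfrac{1}{\sqrt{2}}$ respectively; in $(N, 1)$ with $N \ge 2$, $\theta' = \tfrac{1}{\sqrt{2}}$ and $|s| > |r|$. In all these cases the slack $(\theta' + 1)/\delta$ in Lemma~\ref{kl_mod_d} is small enough to exclude $(k', \ell') = (2, 1)$, so the smallest admissible nontrivial pair is at least $(3, 2)$, for which the inequality holds by inspection. The main obstacle is thus confined to the analysis of $(N, 0)$; once that critical pair is handled, the proof concludes as in Theorem~\ref{t:d14}.
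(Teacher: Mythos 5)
Your overall strategy is the paper's: reduce via Theorem~\ref{Prop_FormulaForCR} and Theorem~\ref{t:lbound}, dispose of the case where $\pi(w)$ uses one letter type by Proposition~\ref{p:short}, and split the remaining bispecial factors into the four families $(1,1)$, $(1,2)$, $(N,0)$ and $(N,1)$ via Proposition~\ref{prop:returnWords} and Lemma~\ref{kl_mod_d}. Your treatment of the genuinely critical case --- the pair $(k',\ell')=(2,1)$ in the family $(N,0)$ --- is correct and is essentially the paper's argument: the recurrence \eqref{eq:convergents} with $a_N\ge 2$ gives $|r|=a_N|s|+p_{N-2}+q_{N-2}>2|s|$ for all $N\ge 2$ (your separate check of $|r|_2=7$, $|s|_2=2$ is consistent with this), which settles $2|r|-4|s|\ge -20$.

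There is, however, one step that fails as stated. You claim that for all three families $(1,1)$, $(1,2)$ and $(N,1)$ the window of Lemma~\ref{kl_mod_d} excludes $(k',\ell')=(2,1)$, so that the smallest admissible pair is $(3,2)$. This is true for $(1,2)$ and $(N,1)$, where $\theta'=\tfrac{1}{\sqrt2}$ and the left inequality forces $\ell'>2\theta'-\tfrac{\theta'+1}{6}\approx 1.13$. But for the family $(1,1)$ the derived slope is $\theta'=[0,\overline{2}]=\sqrt2-1$, exactly as in the family $(N,0)$, and the admissibility window for $k'=2$ is $\bigl(2(\sqrt2-1)-\tfrac{\sqrt2}{6},\ 2(\sqrt2-1)+\tfrac{\sqrt2}{6}\bigr)\approx(0.593,\,1.064)$, which does contain $\ell'=1$. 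So $(2,1)$ is admissible there and must be checked; your exclusion argument does not apply. The gap is easily repaired --- for $(1,1)$ one has $|r|=2$, $|s|=3$, so $(2,1)$ gives $|w|/|v|=\tfrac{2+3-2}{12\cdot 2+6\cdot 3}=\tfrac{1}{14}<\tfrac{1}{10}$, which is precisely the computation the paper performs --- but as written your proof asserts a false exclusion and silently skips the binding subcase of one family.
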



We prove the following analog of Proposition~\ref{long_2d}.

\begin{proposition}\label{long}
 Let $w$ be a bispecial factor of $\xx_{12}$ such that $\pi(w)$ contains ${\tt a}$  and $\tt b$. Then  
$\frac{|w|}{|v|} < \frac1{10}$ for every 
 return word $v$ to $w$ in $\xx_{12}$. 
 \end{proposition}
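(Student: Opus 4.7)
The plan is to mimic the proof of Proposition~\ref{long_2d} with $\delta=6$ and the specific continued fraction $\theta=[0,1,3,\overline{2}]$ of $\uu$'s slope. By Lemma~\ref{mod6}, $z:=\pi(w)$ is a bispecial factor of $\uu$ containing both $\tt a$ and $\tt b$, so Proposition~\ref{prop:returnWords} associates to $z$ a unique pair $(N,m)$ with $N\ge 1$ and $0\le m<a_{N+1}$. Since $a_1=1$, $a_2=3$, $a_i=2$ for $i\ge 3$, and the pair $(1,0)$ (corresponding to $z=\tt b$) is excluded by the assumption on $\pi(w)$, the remaining cases are $(1,1)$, $(1,2)$, and $(N,0)$, $(N,1)$ for all $N\ge 2$.

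A direct evaluation of $\theta'=[0,a_{N+1}-m,a_{N+2},\ldots]$ yields only two values: $\theta'=[0,\overline{2}]=\sqrt{2}-1$ for the pairs $(1,1)$ and $(N,0)$ with $N\ge 2$, and $\theta'=[0,1,\overline{2}]=1/\sqrt{2}$ for $(1,2)$ and $(N,1)$ with $N\ge 2$. Writing $k=6k'$, $\ell=6\ell'$ as in the proof of Proposition~\ref{long_2d}, Item~3 of Lemma~\ref{kl_mod_d} becomes
\[
\theta' k' - \tfrac{\theta'+1}{6} \;<\; \ell' \;<\; \theta' k' + \tfrac{\theta'+1}{6}.
\]
A brief enumeration of small nonnegative pairs with $k'+\ell'\ge 1$ shows that the smallest admissible one is $(k',\ell')=(2,1)$ when $\theta'=\sqrt{2}-1$ and $(k',\ell')=(3,2)$ when $\theta'=1/\sqrt{2}$ (the case $(1,1)$ narrowly fails for $1/\sqrt{2}$ because $|1-\theta'|>(\theta'+1)/6$).

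With these minima, I would check the inequality $|w|/|v|<1/10$ using $|w|=|r|+|s|-2$ from Proposition~\ref{prop:returnWords} and $|v|\ge 6(k'|r|+\ell'|s|)$. When $\theta'=1/\sqrt{2}$ the inequality $10(|r|+|s|-2)<18|r|+12|s|$ reduces to the trivially true $-20<8|r|+2|s|$. When $\theta'=\sqrt{2}-1$ it reduces to $|r|>2|s|-10$. This is immediate for $(1,1)$ (with $|r|=2$, $|s|=3$) and $(2,0)$ (with $|r|=7$, $|s|=2$); for $(N,0)$ with $N\ge 3$ it follows from the convergent recurrence $p_N+q_N=2(p_{N-1}+q_{N-1})+(p_{N-2}+q_{N-2})$, valid because $a_N=2$, which gives $|r|\ge 2|s|+1$.

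The main obstacle, and the very reason the slope was chosen in this form, is the one highlighted in the Remark following Proposition~\ref{long_2d}: for $\delta=6$ the admissible pair $(k',\ell')=(2,1)$ is no longer ruled out by Lemma~\ref{kl_mod_d}. If one naively carried over the Fibonacci-style tail $\overline{1}$ from the $\delta\ge 7$ construction, the ratio $|r|/|s|$ would tend to $\varphi^{-1}\approx 1.618$ and the bound would already fail at the pair $(7,0)$, as explicitly witnessed in the Remark. Replacing the tail by $\overline{2}$ pushes the asymptotic ratio $|r|/|s|$ up to $1+\sqrt{2}$, which is large enough to preserve $|w|/|v|<1/10$ at all levels and, moreover, yields the uniform bound $|r|\ge 2|s|+1$ used above.
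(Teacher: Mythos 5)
Your proposal is correct and follows essentially the same route as the paper's proof: the same case split into $(1,1)$, $(1,2)$, $(N,0)$, $(N,1)$ for $N\ge 2$, the same two values $\theta'=\sqrt{2}-1$ and $\theta'=1/\sqrt{2}$, the same componentwise bounds $k'\ge 2,\ \ell'\ge 1$ (resp.\ $k'\ge 3,\ \ell'\ge 2$) from Lemma~\ref{kl_mod_d}, and the same final estimates. The only cosmetic difference is that for $\theta'=\sqrt{2}-1$ you reduce the target inequality to $|r|>2|s|-10$ and verify it case by case, whereas the paper derives the slightly stronger $|r|>2|s|$ directly from the convergent recurrence; both are valid.
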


\begin{proof} 
By  Lemma \ref{mod6}, $z:=\pi(w)$ is a bispecial factor in $\uu$. As $z$ contains both $\tt a$ and $\tt b$, it can be any  bispecial factor of $\uu$ except $\tt b$.  We use Proposition~\ref{prop:returnWords} to find the pair $(N,m)$  with $m< a_{N+1}$ corresponding to $z$ and the slope $\theta'$ of the derived sequence ${\dd}_{\uu}(z)$. As the pair $(1,0)$ corresponds to $z=\tt b$, we need to analyse all pairs $(N, 0)$ and $(N,1)$ for $N\geq 2$ and the pairs $(1,1)$ and $(1,2)$.

\begin{itemize}
    

    \item The pair  $(N, 0)$, $N\geq 2$: \quad  By Proposition \ref{prop:returnWords}, $\theta' =         [0,\overline{2}] =\sqrt{2}-1$. 
    
    Let $k,\ell$ satisfy Items 1--3 from Lemma~\ref{kl_mod_d}. Then $k=6k'$ and $\ell = 6\ell'$ for some $\ell', k' \in \mathbb{N}, k'+\ell'\geq 1$. Dividing all parts of condition~3 by $6$, we get
    \begin{equation}\label{eq:kl_prime6}
    (\sqrt{2}-1)k' - \frac{\sqrt{2}}{6} < \ell' < (\sqrt{2} - 1)k' + \frac{\sqrt{2}}{6}\,.         
    \end{equation}
    We successively obtain $\ell'\ge 1$ from the left inequality and $k'\ge2$ from the right inequality of \eqref{eq:kl_prime6}. By Proposition~\ref{prop:returnWords},  $|w| = |\pi(w)| = |r| + |s|-2$, $|r| =p_N+q_N$, and $|s| =p_{N-1}+q_{N-1}$.  As \eqref{eq:convergents} implies $|r|  = a_N|s| + p_{N-2}+q_{N-2}>  a_N |s| = 2|s|$, we have
    \begin{equation*}\label{eq:w_to_v6}
    \frac{|w|}{|v|}=  \frac{|r|+|s| - 2}{6k'|r| +6\ell'|s|}\leq  \frac{|r|+|s| - 2}{12|r| +6|s|} \leq  \frac{|r|+|s| - 2}{10|r| +10|s|}< \frac1{10}\,.
    \end{equation*} 

    \item The pair  $(N, 1)$, $N\geq 2$:\quad  By  Proposition \ref{prop:returnWords}, $\theta' = [0,1, \overline{2}] = \frac{1}{\sqrt{2}}$. Similarly to \eqref{eq:kl_prime6}, the pair $k', \ell'$ satisfies 
        $$ 
        \frac{1}{\sqrt{2}} k' -\frac{1+\sqrt{2}}{6\sqrt{2}} < \ell' < \frac{1}{\sqrt{2}} k' + \frac{1+\sqrt{2}}{6\sqrt{2}}\,.  
        $$
    From the left inequality, $\ell'\ne 0$ and, moreover, $\ell'=1$ implies $k'\le 1$, contradicting the right inequality. Hence $\ell'\geq 2$; now $k'\geq 3$ from the right inequality. Therefore,
    \begin{equation*}
    \frac{|w|}{|v|}=  \frac{|r|+|s| - 2}{6k'|r| +6\ell'|s|}\leq  \frac{|r|+|s| - 2}{18|r| +12|s|} < \frac1{10}.
    \end{equation*}     

    \item The pair  $(1, 1)$: \quad From \eqref{eq:convergents} and Proposition~\ref{prop:returnWords} we have  $|r|= p_1+q_1=2$, $|s| = p_1+q_1+p_0+q_0 = 3$ and  $\theta' = [0,\overline{2}] = \sqrt{2}-1$.  Since $\theta'$ is the same as in the case $(N,0), N\geq 2$, we have the same estimate
    $$
    \frac{|w|}{|v|}\leq  \frac{|r|+|s| - 2}{12|r| +6|s|}  = \frac1{14} < \frac{1}{10}\,.
    $$
 
    \item The pair  $(1, 2)$: \quad By \eqref{eq:convergents} and Proposition~\ref{prop:returnWords}, $|r|= p_1+q_1=2$, $|s| = 2(p_1+q_1)+p_0+q_0 = 5$ and  $\theta' = [0,1,\overline{2}] = \frac{1}{\sqrt{2}}$.  Since $\theta'$ is the same as in the case $(N,1), N\geq 2$, we have the same estimate
    $$
    \frac{|w|}{|v|}\leq  \frac{|r|+|s| - 2}{18|r| +12|s|}  = \frac5{96} <\frac{1}{10}\,.
    $$
 \end{itemize}
\end{proof}

\begin{proof}[Proof of Theorem~\ref{t:d12}]
By Theorem~\ref{Prop_FormulaForCR}, it is sufficient to show that $\frac{|w|}{|v|}\le \frac{1}{10}$ for every bispecial factor $w$ of $\xx_{12}$ and every return word $v$ to $w$ in $\xx_{12}$. If $\pi(w)$ contains both $\tt a$ and $\tt b$, the required inequality follows from Proposition~\ref{long}. If $\pi(w)$ contains only one of these letters, the inequality follows from Proposition~\ref{p:short}. 
\end{proof}

\section{Conclusion and Open Problems}

We have shown that for balanced sequences over a $d$-ary alphabet, the critical exponent is greater than or equal to $\frac{d-1}{d-2}$ for $d \geq 11$. In fact, the proved result is a bit stronger: a balanced $d$-ary sequence contains a factor of exponent at least $\frac{d-1}{d-2}$ (thus in the case $\mathrm{E}(\vv)=\frac{d-1}{d-2}$, the supremum is reached). Further, we have proved this lower bound sharp for all alphabets of even size $d\ge 12$ by presenting an explicit construction of balanced sequences with the required critical exponents. Based on these results, we state a new conjecture, replacing the conjecture from \citep{RSV19}, which fails for $d\ge 11$.

\begin{conjecture} \label{conj}
The minimal critical exponent of a $d$-ary balanced sequence with $d\geq 11$ equals $\frac{d-1}{d-2}$. 
\end{conjecture}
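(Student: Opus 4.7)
Since Theorem~\ref{t:lbound} already gives the lower bound $\mathrm{E}(\vv) \geq \tfrac{d-1}{d-2}$ for every balanced $d$-ary $\vv$ with $d \geq 11$, and Theorems~\ref{t:d14} and~\ref{t:d12} attain this bound for all even $d \geq 12$, Conjecture~\ref{conj} reduces to constructing, for each odd $d = 2\delta+1\geq 11$, an aperiodic balanced $d$-ary sequence of critical exponent exactly $\tfrac{d-1}{d-2}$. My plan is to extend the framework of Section~\ref{s:sequences} using an \emph{asymmetric} colouring $\xx_d = \barva(\uu,\yy,\yy')$, where $\yy = (1 2\cdots\delta)^\omega$, $\yy' = (1' 2'\cdots(\delta+1)')^\omega$, and $\uu$ is a standard Sturmian with a quadratic slope to be specified.

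The slope $\theta$ is pinned down by Proposition~\ref{p:maxfreq}: the maximum letter frequency in $\xx_d$, which is $\rho_{\tt b}(\uu)/(\delta+1)$, must lie in $\bigl(\tfrac{1}{d-1},\tfrac{1}{d-2}\bigr)$. A short computation translates this into $\theta \in \bigl(\tfrac{\delta-2}{\delta+1}, \tfrac{\delta-1}{\delta+1}\bigr)$. Within this window I would pick $\theta$ to be a periodic quadratic irrational whose continued fraction begins with carefully chosen partial quotients; for $\delta$ large enough one can hope that a uniform choice of the form $\theta=[0,1,\lfloor(\delta+1)/2\rfloor,\overline{1}]$ works, with small exceptional $\delta$ treated separately. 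Once $\theta$ is fixed, the proof would follow the pattern of Section~\ref{s:sequences}: first establish an analog of Proposition~\ref{p:short} bounding distances between occurrences of short factors via the block decomposition of $\uu$ induced by the initial partial quotients of $\theta$; then prove an analog of Proposition~\ref{long_2d} by running through the pairs $(N,m)$ of Proposition~\ref{prop:returnWords} and showing for each that the shortest admissible return word $v$ to every bispecial $w$ satisfies $|w|/|v|\leq \tfrac{1}{d-2}$.

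The main technical obstacle is a change of divisibility structure that breaks the clean arithmetic of Lemma~\ref{kl_mod_d}. In the even case the equal gap-sizes forced both $|\pi(v)|_{\tt a}$ and $|\pi(v)|_{\tt b}$ to be divisible by $\delta$, so that the unimodular change of basis by the Parikh matrix $A$ of Proposition~\ref{prop:returnWords} gave $\delta \mid k$ and $\delta \mid \ell$ cleanly. In the asymmetric odd case one only has $\delta \mid |\pi(v)|_{\tt a}$ and $(\delta+1) \mid |\pi(v)|_{\tt b}$, and since $\gcd(\delta,\delta+1)=1$, the resulting constraint confines $(\ell,k)$ to an index-$\delta(\delta+1)$ sublattice of $\mathbb{Z}^2$ whose generators depend on $A \bmod \delta(\delta+1)$, that is, on the bispecial pair $(N,m)$. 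Consequently the lower bounds on $k,\ell$ that replace ``$\delta \mid k,\ell$'' become case-dependent modular inequalities, and verifying them uniformly over all bispecials requires a more delicate analysis than in Propositions~\ref{long_2d} and~\ref{long}. I expect this lattice bookkeeping to be the hard part, and would not be surprised if the smallest odd cases $d = 11, 13$ demand ad hoc constructions with non-trivial constant gap sequences (built from exact covers of $\mathbb{Z}$), analogous to the separate treatment of $d = 12$ in Theorem~\ref{t:d12}.
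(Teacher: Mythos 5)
You are trying to prove a statement that the paper itself does not prove: this is Conjecture~\ref{conj}, stated as open. The paper establishes the lower bound (Theorem~\ref{t:lbound}) and its attainment for even $d\ge 12$ (Theorems~\ref{t:d14} and~\ref{t:d12}), exactly as you recall, and then explicitly leaves all odd $d$ unresolved, offering for $d=11$ only a computer-verified example whose proof is omitted. Your reduction of the conjecture to the odd case is correct and matches the paper's own framing, but everything you offer beyond that is a programme, not a proof: the analogues of Proposition~\ref{p:short}, Lemma~\ref{kl_mod_d} and Proposition~\ref{long_2d} are announced rather than carried out, and the ``lattice bookkeeping'' over the index-$\delta(\delta+1)$ sublattice, which you yourself identify as the hard part, is precisely where the proof would have to live. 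As it stands, nothing in the proposal closes the gap.

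Moreover, the one concrete ingredient you do commit to is provably wrong, by the paper's own results. Any slope with continued-fraction prefix $[0,1,a_2,\ldots]$ has the form $\theta=\frac{a_2+\xi}{a_2+1+\xi}$ with $\xi\in(0,1)$, hence $\theta>\frac{a_2}{a_2+1}$. For your choice $a_2=\lfloor(\delta+1)/2\rfloor$ we get $2a_2\ge\delta$, so $\theta>\frac{a_2}{a_2+1}\ge\frac{\delta}{\delta+2}>\frac{\delta-1}{\delta+1}$, i.e.\ $\theta$ lies \emph{above} the window $\bigl(\frac{\delta-2}{\delta+1},\frac{\delta-1}{\delta+1}\bigr)$ that you correctly derived from Proposition~\ref{p:maxfreq}. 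Concretely, $\theta>\frac{\delta-1}{\delta+1}$ forces $\rho_{\tt b}=\frac{1}{1+\theta}<\frac{\delta+1}{2\delta}$, so $\frac{\rho_{\tt b}}{\delta+1}<\frac{1}{2\delta}=\frac{1}{d-1}$, and also $\frac{\rho_{\tt a}}{\delta}<\frac{1}{2\delta}$ since $\rho_{\tt a}<\frac12$; thus every letter frequency in your $\xx_d$ is below $\frac{1}{d-1}$, and by Proposition~\ref{p:maxfreq} combined with Theorem~\ref{t:lbound} the critical exponent of $\xx_d$ is strictly greater than $\frac{d-1}{d-2}$. So the ``uniform choice'' fails for \emph{every} $\delta$, not merely for small exceptional cases; a viable slope must satisfy, e.g.\ for $d=11$, $\theta\in(\frac12,\frac23)$, i.e.\ begin $[0,1,1,\ldots]$. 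Finally, note that the paper's computer-found $\xx_{11}$ abandons your symmetric cyclic ansatz entirely: it uses $\yy=(\mathtt{12})^\omega$ and a period-$40$ constant gap sequence $\yy'$ over $9$ letters with gaps $8$ and $10$, and slope $\theta=[0,5,1,\overline{1,1,1,2}]$ --- evidence that for small odd $d$ the split $\delta$ versus $\delta+1$ with equal gaps may not admit any slope achieving the bound, so your last hedge (ad hoc constant gap sequences from exact covers) is likely not a fallback but the main route.
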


This conjecture remains open for alphabets of odd size. As the next step to set the conjecture, we have found, with the aid of computer search, a balanced sequence $\xx_{11}$ over an 11-letter alphabet with the required critical exponent $\frac{11}{10}$. The construction is very asymmetric and hard to be found by hand: $\xx_{11}=\barva(\uu, \yy, \yy')$,  where $\theta=[0,5,1,\overline{1,1,1,2}]$ and the two constant gap sequences are $\yy=(\tt 12)^{\omega}$  and   $\yy'=$
$$
\tt(1'2'3'4'5'6'7'8'1'9'3'2'5'4'7'6'1'8'3'9'5'2'7'4'1'6'3'8'5'9'7'2'1'4'3'6'5'8'7'9')^{\omega}\,.
$$
We have calculated the critical exponent of the sequence $\xx_{11}$ using our computer program based on the algorithm described in~\citep{DolceDP21}. As the proof is a tedious version of the proofs from Section \ref{s:sequences}, we have not included it in the paper.

\smallskip
Besides Conjecture \ref{conj}, we propose a few questions for further study of the minimal critical exponents of sequences given by certain natural restrictions.
\begin{itemize}
    \item A sequence $\uu\in\A^\mathbb{N}$ is \emph{$k$-balanced} if $\big||u|_a-|v|_a\big|\le k$ for any its factors $u$ and $v$ of equal length and any letter $a$. Thus 1-balanced sequences are exactly balanced sequences; the Thue-Morse sequence, having the minimal critical exponent among binary sequences, is 2-balanced.
    \begin{itemize}
        \item[\bf Q1] What is the minimal critical exponent of a $d$-ary 2-balanced sequence?
    \end{itemize}
    \item A sequence $\uu\in\A^\mathbb{N}$ is \emph{symmetric} if for any its factor $u$ and any bijection $\tau: \A\to\A$, $\uu$ has the factor $\tau(u)$. The Thue--Morse sequence, having the minimal critical exponent among binary sequences, and the Arshon sequence \citep{Ars37}, having the minimal critical exponent among ternary sequences, are symmetric.
    \begin{itemize}
        \item[\bf Q2] What is the minimal critical exponent of a $d$-ary symmetric sequence?
    \end{itemize}
    \item Two words are \emph{Abelian equivalent} if they have equal Parikh vectors. Replacing equality in the notion of a power with Abelian equivalence, Abelian powers, Abelian exponents and Abelian critical exponents are defined. There exist sequences with Abelian critical exponent arbitrarily close to 1 \citep{CaCu99}, but for no alphabet size $d$ the minimal Abelian critical exponent of a $d$-ary sequence is known; see \citep{SaSh12, PeSh21c} for the best known lower bounds.
    \begin{itemize}
        \item[\bf Q3] What is the minimal Abelian critical exponent of a $d$-ary balanced sequence?
    \end{itemize}
\end{itemize}

\section{Acknowledgement} 
The second author was supported  by Czech technical university in Prague, through the project  SGS20/183/OHK4/3T/14.
The first and the third authors were supported by the Ministry of Education, Youth and Sports of the Czech Republic through the project  CZ.02.1.01/0.0/0.0/16\_019/0000778. The fourth author acknowledges the support by the Ministry of Science and Higher Education of the Russian Federation (Ural Mathematical Center project No. 075-02-2021-1387).

\bibliography{biblio.bib}
\bibliographystyle{elsarticle-harv.bst}

\end{document}